\newif\ifdraft
\title{\LARGE \bf
A Parallel Dual Fast Gradient Method for MPC
Applications{$^*$}}
\author{\centering Laura Ferranti{$^{1}$},~Tam\'as Keviczky{$^{1}$
\thanks{*This research is supported by the European Union's Seventh Framework Programme FP7/2007-2013 under grant agreement n$\,^\circ$ AAT-2012-RTD-2314544 entitled ``Reconfiguration of Control in Flight for Integral Global Upset Recovery (RECONFIGURE)''.}
 \thanks{{$^{1}$}L. Ferranti and T.
Keviczky are with the Delft Center for Systems and Control, Delft University of Technology, Delft, 2628 CD, The Netherlands,
        {\tt\small $\{$l.ferranti,t.keviczky$\}$@tudelft.nl}}}%
}
\newtheorem{assumption}{Assumption}
 \theoremstyle{remark}
 \newtheorem{rem}{Remark}
 \theoremstyle{remark}
  \newtheorem{lem}{Lemma} 
  \theoremstyle{plain}
   \newtheorem{thm}{Theorem}
\theoremstyle{plain}
\DeclareMathOperator{\1}{\mathbf{1}}
\DeclareMathOperator{\ubf}{\mathbf{u}}
\DeclareMathOperator{\mcU}{\mathcal{U}}
\newcommand\upbound[1]{\ensuremath{\min\limits_{j=1,\ldots,p_{#1}}\!\!\!\{-(G_{#1}\tilde y_{#1}+g_{#1})_j\}}}
\newcommand\upboundspecial[1]{\ensuremath{\!\!\!\!\!-\!\frac{\min\limits_{j=1,\ldots,p_{#1}}\{-(G_{#1}\tilde
y_{#1}+g_{#1})_j\}}{2^t(1+2\max_{C_{#1})}}}}
\newcommand\maxC[1]{\ensuremath{\max\limits_{j=1,\ldots,p_t}\!\!\!\big\{\sum_{i=1}^n|[C_{#1}]_{j,i}|\big\}}}
\newcommand\sigmamax[1]{\ensuremath{|A|^{#1}}} \newcommand\timeint[2]{\ensuremath{t\!=\!#1,\ldots,#2}}
\newcommand\posdef[1]{\ensuremath{\mathbb S^{{#1}}_{>0}}}
\newcommand\possemdef[1]{\ensuremath{\mathbb S^{{#1}}_{\geq 0 }}}
\newcommand\bfV{\ensuremath{\mathbf V}}
\newcommand{\itglob}{\textit{consolidated}~}
\newcommand{\itloc}{\textit{local}~}
\begin{document} 
\maketitle
\thispagestyle{empty}
\pagestyle{empty}
\hyphenation{pro-blem}
\hyphenation{sub-pro-blem}
\hyphenation{sub-pro-blems}
\hyphenation{sol-ving}
\hyphenation{pa-ral-le-li-za-ti-on}
\hyphenation{tigh-te-ned}
\hyphenation{nu-me-ri-cal}
\begin{abstract}  
We propose a parallel adaptive constraint-tightening
 approach to solve a linear model predictive control problem for discrete-time
 systems, based on inexact numerical optimization algorithms and operator
 splitting methods. The underlying algorithm first splits the original problem in as
 many independent subproblems as the length of the prediction horizon. Then, our
 algorithm computes a solution for these subproblems in parallel by exploiting auxiliary 
 tightened subproblems in order to certify the control law in terms of suboptimality and 
 recursive feasibility, along with closed-loop stability of the 
 controlled system. Compared to prior approaches based on constraint tightening,
  our algorithm computes the tightening parameter for each subproblem 
  to handle the propagation of errors introduced by the parallelization of the
  original problem. Our simulations show the computational benefits of the
  parallelization with positive impacts on performance and numerical conditioning when compared
   with a recent nonparallel adaptive tightening scheme.
\end{abstract}
\section{Introduction}
Model Predictive Control (MPC) is a consolidated control technique that can
efficiently handle constraints on the process to be controlled. 
Nevertheless, its application is not yet widespread in many domains where
real-time computational constraints and requirements of certified solutions are
of major concern, such as aerospace or automotive applications. There is a
growing interest in both industry and academia for exploring parallel solutions
to MPC problems~(\cite{PQP2,KogelIFAC,StathopoulosECC13}), especially in light
of the emerging many-core architectures, aiming to improve the computational
efficiency of solving the underlying optimization problem.

\textbf{Contribution}. In this paper, we explore the use of
parallelization techniques to efficiently solve a typical MPC problem
for a linear discrete-time system, with a substantial computational speedup 
compared to nonparallel implementations.
Our proposed algorithm combines the use of Alternating Direction
Method of Multipliers (ADMMs~\cite{Boyd}, \cite{Bertsekas}) to handle the
coupling constraints that arise from the dynamics of the system and inexact
solvers (i.e., solvers that guarantee feasibility and optimality only
\textit{asymptotically} with the number of iterations), such as the Nesterov's
Dual Fast Gradient (DFG) method~\cite{Nesterov1}. In particular, the first step
of the proposed algorithm is to split the original MPC problem over the length
$N$ of the prediction horizon into $N+1$ independent subproblems
(\textit{time-splitting}~\cite{StathopoulosECC13}) solved by $N+1$
parallel \emph{workers} periodically exchanging information at predetermined
synchronization points.
Then, the second step is to solve these subproblems in parallel using an inexact solver and guarantee, at the same time, that the solution of the original MPC problem is \emph{recursively feasible} and the system is \emph{closed-loop stable}.
The combination of parallelization and inexact solvers can result in
infeasibility and closed-loop instability.
We rely on an algorithm based on constraint tightening to overcome
these issues.
Loosely speaking, constraint-tightening algorithms solve an alternative
problem in which the constraints have been tightened by a certain amount to
compensate for the accuracy loss (and possible related infeasibility) introduced
by the solver. We rely on an \textit{adaptive} tightening strategy to select an
appropriate amount of tightening for our algorithm. Every time
new measurements are available from the plant, our algorithm chooses the amount
of tightening required for each subproblem in order to compensate for the error
introduced by the time-splitting combined with the inexact solver.

\textbf{Related work}. The \textit{time-splitting} technique has been
proposed in~\cite{StathopoulosECC13}. In contrast to~\cite{StathopoulosECC13},
we combine ADMM with inexact solvers and focus on the requirements for recursive
feasibility and closed-loop stability of the original problem.

Other constraint-tightening schemes have been proposed in the literature
(outside the parallel framework). For example, the authors in~\cite{RugPat:13} propose an algorithm in
which the amount of tightening is chosen offline to guarantee
suboptimality and feasibility of the solution for all the initial states of the MPC problem.
Solutions based on adaptive constraint tightening have been recently proposed in
\cite{NecoaraOCAM}, where the tightening parameter is chosen adaptively. Compared 
to~\cite{NecoaraOCAM}, our tightening update rule allows for a nonuniform
amount of tightening (the tightening varies along the prediction horizon).
Furthermore, thanks to the modular structure of our approach, the optimizer solves
simpler problems of fixed dimension, which is independent from $N$. As a
consequence, an increase of $N$ does not affect the conditioning of the problem
and the convergence of the solver. Hence, our approach leads to a
performance improvement even when forcing full serialization of the parallel
operations (i.e., \emph{serialized} mode~\cite{CS}).

\textit{\bf Outline}. {In the following,
Section~\ref{sec:problem_formulation} presents the initial problem formulation.
Section~\ref{sec:subproblem_reformulation} introduces the auxiliary
subproblems and our proposed solver. Section~\ref{sec:tightening_improvements}
describes our strategy to select the tightening of each subproblem to handle the parallelization error.
Section~\ref{sec:closed_loop_certification} proposes an online update strategy of the tightening parameters that 
guarantees recursive feasibility, suboptimality, and closed-loop stability.
Section~\ref{sec:evaluation} presents numerical results using an academic
example.
Finally, Section~\ref{sec:conclusion} concludes the paper.}
  
\textit{\bf Notation}.
For $u\in \mathbb R^n$, $\left \| u \right
\|=\sqrt{\langle u,u \rangle}$ is the Euclidean norm and $\left[ u \right]_+$ is
the projection onto non-negative orthant $\mathbb R^n_+$. Given a matrix $A$,
$[A]_i$ denotes the $i$-th row of A and $[A]_{i,j}$ the entry $(i,j)$ of A.
 Furthermore, $\mathbf 1_{n}$ is the vector of ones in $\mathbb R^n$ and 
 $I_n$ the identity matrix in $\mathbb R^{n\times n}$. In addition,
 $\textrm{eig}_{\max}(A)$ and $\textrm{eig}_{\min}(A)$ denote the largest and
 the smallest (modulus) eigenvalues of the matrix A, respectively. $P\in
 \posdef{n}$ denotes that $P\in \mathbb R^{n\times n}$ is positive definite.
\section{Problem Formulation}
\label{sec:problem_formulation}     
\hyphenation{sub-pro-blem}
Consider the discrete-time linear system described below:
\begin{equation}
\label{eq:LTI_system}
x(t+1) = Ax(t) +Bu(t)\quad\quad \forall t\geq 0,
\end{equation}   
where $x(t)\in \mathcal X \subseteq \mathbb R^n$ denotes the state of the
system and $u(t)\in \mathcal U \subseteq \mathbb R^m$ denotes the control
input. The sets $\mathcal X$ and $\mathcal U$ are simple proper convex sets
(i.e., convex sets that contain the origin in their interior).
Our goal is to steer $x(t)$ to the origin and satisfy
the plant constraints. We use MPC to achieve these
objectives.
In this respect, consider the following finite-time optimal control problem:
\begin{subequations} 
\label{eq:initial_problem}
\begin{align}
\label{eq:cost_fun}
\mathcal{V}^*(x_{\textrm{init}})=&{\underset{x,u}{\text{min}}}~\frac{1}{2}
\sum\limits_{t=0}^{N-1} (x_t^{T}Qx_t^{}\!+\!u_t^{T}Ru_t^{})+x_N^TP_Nx_N\\
\label{eq:dyn} \text{s.t.: }&
x_{t+1}^{} = A x_t^{}+B
 u_t^{},\quad~~\timeint{0}{N-1}\\
  \label{eq:set}
 &C x_t^{}+D u_t^{}+g\le 0,~\timeint{0}{N-1} \\
 & x_0=x_{\textrm{init}}\\
 \label{eq:terminal_set} 
  &x_N\in\mathcal{X}_N.
\end{align}
\end{subequations}
where $x_t$ and $u_t$ are more compact notations for $x(t)$ and $u(t)$,
respectively.
For $\timeint{0}{N-1}$ ($N$ denotes the prediction horizon), the states
and the control inputs are constrained in the polyhedral set described by~\eqref{eq:set}, where $C\in\mathbb{R}^{p_t\times n}$, $D\in\mathbb{R}^{p_t\times m}$, $g\in\mathbb{R}^{p_t}$. Note that \eqref{eq:set} can include constraints on the state only, i.e., $x_t\in \mathcal{X}$, and/or constraints on the control inputs only,
i.e., $u_t\in \mathcal{U}$. In~\eqref{eq:cost_fun}, $Q\in
\possemdef{n}$ and $R\in \posdef{m}$.
Our problem formulation considers also a terminal cost $x_N^TP_Nx_N$ associated with a terminal polyhedral set
 $\mathcal{X}_N~:=~\{x\in\mathbb R^n|F_N x\mathbf\le f_N, F_N\in \mathbb R^{p_N\times n}, f_n\in \mathbb R^{p_N}\}$.

Through the remaining of the paper, we assume:
\begin{assumption}
\label{ass:stabilizability}
 The pair $(A,B)$ is stabilizable.
 \end{assumption}
 \vspace{-0.3cm}
 \begin{assumption}
 \label{ass:terminalSet}
 Suppose Assumption~\ref{ass:stabilizability} holds. Given the gain $K_f\in
 \mathbb{R}^{m\times n}$ obtained by the infinite-horizon linear quadratic regulator (IH-LQR)---characterized by the matrices $A$, $B$, $Q$, and $R$---the following holds:
\begin{align*}
\forall x\in \mathcal{X}_N~\Rightarrow~&\begin{cases} x\in\mathcal{X},~ K_f u\in\mathcal{U},~\text{and}\\
                                       (A+BK_f)x\in \mu\mathcal{X}_N,~0\le\mu<1.\end{cases}  
\end{align*}
In addition, the terminal penalty $P_N\in \posdef{n}$ in the stage
cost~\eqref{eq:cost_fun} is defined by the solution of the algebraic
Riccati equation associated with the IH-LQR.
\end{assumption} 
In general, the MPC controller solves the optimization
problem~\eqref{eq:initial_problem} every time new measurements are available
from the plant and returns an optimal sequence of states and control inputs that
minimizes the cost~\eqref{eq:cost_fun}. Let the optimal sequence be defined as
follows:
\begin{equation}
\label{eq:consolidated_prediction}
\{\mathbf x,\mathbf u\} := \{x_0,\ldots,x^*_N, u_0^*,\ldots,u_{N-1}^*\}.
\end{equation}
Only the first element of
$\mathbf u$ is implemented in closed-loop, i.e., the control law obtained using
the MPC controller is given by:
\begin{equation}
\label{eq:MPC_control_law}
\kappa_{\text{MPC}}(x_{\text{init}}) = u_0^*, 
\end{equation}
and the closed-loop system is described by
\begin{equation}
x(t+1) = Ax(t)+B\kappa_{\text{MPC}}(x_{\text{init}}).
\end{equation} 
 \subsection{Parallelization}
\label{subsec:parallelization}
We aim to solve Problem~\eqref{eq:initial_problem} in parallel. Hence, we
exploit a similar approach as the one proposed in~\cite{StathopoulosECC13}. Specifically, as in~\cite{StathopoulosECC13}, 
Problem~\eqref{eq:initial_problem} is decomposed along the length of the
prediction horizon $N$ into $N+1$ independent subproblems to be solved by $N+1$
\emph{parallel workers} $\Pi_t$ (\timeint{0}{N}). Each $\Pi_t$ is
allowed to communicate with its neighbours $ \Pi_{t-1}$ and $\Pi_{t+1}$ at
predefined synchronization points.
The decomposition is possible thanks to the introduction of $N$ auxiliary
variables $z_t~(\timeint{1}{N})$ used to break the dynamic coupling that arises from~\eqref{eq:dyn}. These $z_t$ can be seen as the global variables of the algorithm. In particular, each $z_t$ stores the local predicted state $x_{t+1}$ of
 each subproblem and exchanges this stored information to guarantee consensus
 between neighboring subproblems, i.e., to ensure that the predicted state of
 the $(t)$-th subproblem, namely $x_{t+1}^{(t)}$, is equal to the current state
 of the $(t\!+\!1)$-st subproblem, namely $x_{t+1}^{(t+1)}$. Specifically, by
 introducing the {\it consensus constraints} $z_{t+1}\! =\! x_{t+1}^{(t)}\!
 =\! x_{t+1}^{(t+1)}$, defining $y_t := [x_{t}^{(t)T} ~ u_{t}^{(t)T}]^T$, $H_1:=
 [I_n~0]$, $H_2:= [A~B]$, $\rho>0$  Problem~\eqref{eq:initial_problem} becomes:
\begin{subequations}
\label{eq:consensus_problem}
\begin{align} 
\label{eq:cost_fun_consensus}
&{\underset{y,z}{\text{min}}}~ \sum\limits_{t=0}^{N} \mathbf{V}_t\big
(y_t, z_t\big)  \\
&\label{eq:set_consensus}\text{s.t.: }
 G_t y_t+g_t\le\! 0,\quad~ \timeint{0}{N-1}\\
 &~~~~~~ H_1y_0=x_{\textrm{init}},\\
 &~~~~~~ H_1y_N\in\mathcal{X}_{N}^{},\\ 
  \label{eq:consensus_1}
  &~~~~~~ z_{t+1} = H_2y_{t},\quad~\timeint{0}{N-1},\\
  \label{eq:consensus_2}
   &~~~~~~ z_{t+1} = H_1y_{t+1},~\timeint{0}{N-1},
\end{align}
\end{subequations} 
where, defining $\xi_t:=[y_t^T z_t^T z_{t+1}^T]^T$: 
\ifdraft
{\color{black}
\begin{itemize}
  \item $\mathbf{V}_0(\xi_0) : = \frac{1}{2}
  \xi_0^T\mathcal Q_0 \xi_0$, where 
  \begin{equation*}
  Q_0 := \begin{bmatrix}\mathcal H_0+\frac{\rho}{2} H_2^TH_2 &
  \frac{\rho}{2}H_2\\
-\frac{\rho}{2}H_2 & \rho I_n
  \end{bmatrix}
  \end{equation*}
  and $H_0:=\text{diag}\{Q,R\}$.
  \item $\mathbf{V}_t(\xi_t) : = \frac{1}{2}
  \xi_t^T\mathcal Q_t \xi_t$, where, for $t=1,\ldots,N-1$,
  \begin{equation*}
  Q_t := \begin{bmatrix}\mathcal H_0+\frac{\rho}{2} (H_2^TH_2+H_1^TH_1) &
  \frac{\rho}{2}H_1 & \frac{\rho}{2}H_2\\
-\frac{\rho}{2}H_1 & \rho I_n &0\\
-\frac{\rho}{2}H_2 & 0& \rho I_n
  \end{bmatrix}
  \end{equation*}
  \item $\mathbf{V}_N(\xi_N) :=\frac{1}{2}
  \xi_N^T\mathcal Q_N \xi_N$ where 
 \begin{equation*}
  Q_N := \begin{bmatrix}\mathcal H_N+\frac{\rho}{2} H_1^TH_1 &
  \frac{\rho}{2}H_1\\
-\frac{\rho}{2}H_1 & \rho I_n
  \end{bmatrix}
  \end{equation*}
 and $H_N := \text{diag}\{P_N,0_{m\times m}\}$.
\end{itemize}} 
\else
\begin{itemize}
  \item $\mathbf{V}_0(\xi_0) : = \frac{1}{2}
  \xi_0^T\mathcal Q_0 \xi_0 = \frac{1}{2}(y_0^TH_0 y_0+\rho\|H_2y_0-z_1\|^2)$,
  where $H_0:=\text{diag}\{Q,R\}$.
  \item $\mathbf{V}_t(\xi_t) : = \frac{1}{2}
  \xi_t^T\mathcal Q_t \xi_t = \frac{1}{2}(y_t^TH_0
 y_t+\rho\|H_2y_t-z_{t+1}\|^2+\rho\|H_1y_t-z_t\|^2)$, $t=1,\ldots,N-1$,.
  \item $\mathbf{V}_N(\xi_N) :=\frac{1}{2}
  \xi_N^T\mathcal Q_N \xi_N = \frac{1}{2}(y_N^TH_N
 y_N+\rho\|H_1y_N-z_N\|^2)$, where $H_N = \text{diag}\{P_N,0_{m\times m}\}$.
\end{itemize}
\fi
Furthermore, $G_t$ and $g_t$ vary for each subproblem as follows:
\begin{itemize}
  \item $G_t := [C~D]$ and $g_t:=g$, $t = 0,\ldots,N-1$.
  \item $G_N := [F_N~0_{p_N\times m}]$ and $g_N:= -f_N$.
\end{itemize}
\begin{rem}Note that we introduced a quadratic penalty in the cost of the form
$\rho/2(\|H_1y_t-z_t\|^2+\|H_2y_t-z_{t+1}\|^2)$,
according to the ADMM strategy~\cite{Boyd}. This penalty has no impact on the
cost of the original problem~\eqref{eq:initial_problem}, if the consensus
constraints are satisfied.
\end{rem}
In the following, we introduce the subproblems that derive from
Problem~\eqref{eq:consensus_problem}. Let $v_{t+1}$ ($\timeint{0}{N-1}$) and
$w_{t}$ ($\timeint{1}{N}$) be the Lagrange multipliers associated with the
equality constraints~\eqref{eq:consensus_1} and~\eqref{eq:consensus_2}, respectively.
Then, let the augmented
Lagrangian with respect to the multipliers $v_{t+1}$ and $w_t$ be defined as follows:
\begin{align*}
\mathcal{L}_{v_{t+1},w_t}\!\! := & \bfV_t(\xi_t)\!+\!\rho[
v_{t+1}^T(H_2y_t\!-\!z_{t+1})+ w_{t}^T(H_1y_t\!-\!z_{t})].
\end{align*}
Hence, we obtain the following $N+1$ independent subproblems, called
\textit{original} subproblems, associated with the $N+1$ workers $\Pi_t$
(\timeint{0}{N}):
\begin{align}
\label{eq:subproblem}
&{\underset{y_t,z_t}{\text{min}}} ~\mathcal{L}_{v_{t+1},w_t}(y_t,z_t,z_{t+1})
~\text{s.t.: } G_t y_t+ g_t^{}\le 0.
\end{align}
\subsection{Overview of our proposed approach and terminology}
\label{subsec:overview}
\begin{figure}
   \centering
    \includegraphics[width =1 \columnwidth]{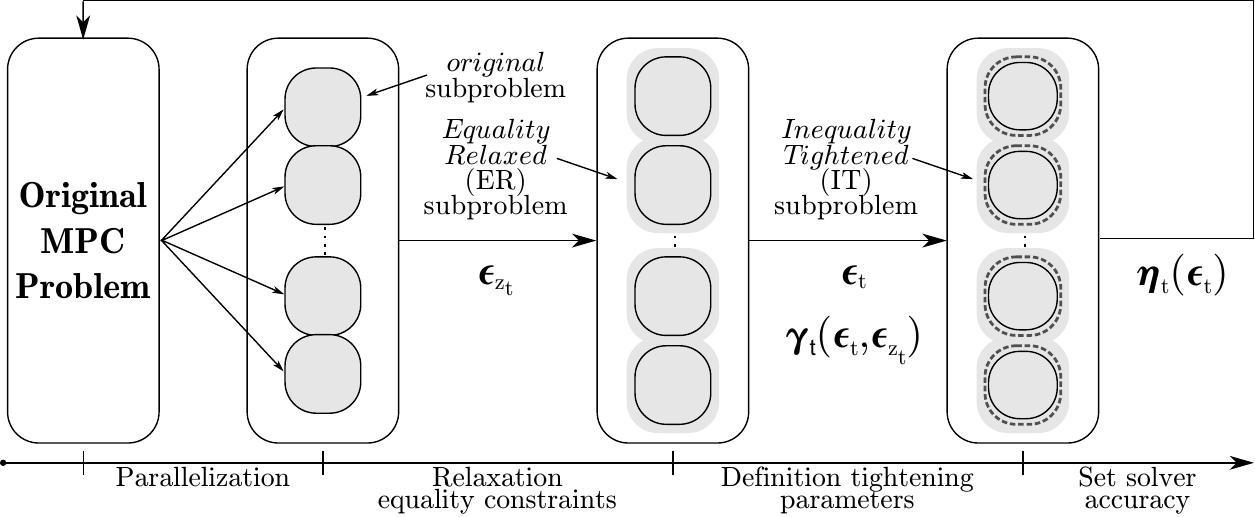}       
    \caption{Notation and terminology.} 
   \label{fig:steps}       
   \vspace{-0.3cm}
 \end{figure}
Figure~\ref{fig:steps} summarizes the main steps that lead to a suboptimal solution of the aforementioned problem and introduces the keywords used in 
the remaining of the paper.
Consider the MPC problem~\eqref{eq:initial_problem} and refer to this problem
as the \textit{original} MPC problem. The first step (\textit{parallelization},
detailed in Section~\ref{subsec:parallelization}) is to rewrite the original
problem in $N+1$ independent subproblems (the \textit{original} subproblems).
The aim is to use a Dual Fast Gradient (DFG) method to solve these subproblems
in order to certify---in terms of suboptimality and recursive primal
feasibility, along with closed-loop stability---the MPC solution.
The use of the inexact solver will eventually cause a violation of the consensus
constraints~\eqref{eq:consensus_1}-\eqref{eq:consensus_2} introduced to define
the subproblems~\eqref{eq:subproblem}. Hence, the
second step (\textit{relaxation equality constraints} detailed in Section~\ref{subsec:equality_relaxation}) is introduced to relax the consensus constraints by a quantity\footnote{Note that the subscript $t$ indicates that $\epsilon_{z_t}$ varies along the prediction horizon. This also holds for the later-defined $\epsilon_t$,
$\gamma_t$, and $\eta_t$.} $\epsilon_{z_t}$, preventing the occurrence of 
consensus-constraint violations. We refer to these subproblems as the
\textit{equality relaxed} (ER) subproblems. The set of inequality constraints of the ER subproblems includes 
the inequality constraints of the original subproblems~\eqref{eq:subproblem} and the
inequality constraints due to the relaxation of the consensus
constraints~\eqref{eq:consensus_1}-\eqref{eq:consensus_2}. The third step (\textit{definition tightening parameters} detailed in Sections~\ref{subsec:up_lagrange_multiplier},~\ref{sec:tightening_improvements}, and~\ref{sec:closed_loop_certification})
is required to address the following remaining issues.
First, the solution of each ER subproblem computed by the dual fast gradient method might violate the inequality constraints, due to 
the termination of the solver after a finite number of iterations. Hence, the constraints
of the ER subproblems must be tightened by a quantity $\epsilon_t$ proportional to the desired
level of suboptimality $\eta_t$ chosen by the algorithm. Second, due to the relaxation of the
consensus constraints, the \textit{consolidated}
prediction, i.e., the predicted evolution of
the state computed (a posteriori) using the control sequence obtained 
by the independent subproblems, might deviate from
the predicted \textit{local} solution computed by the independent
subproblems and, eventually, violate the inequality constraints of the original problem.
Hence, an additional tightening (dependent on $\epsilon_{z_t}$) must be
introduced on the subset of inequality constraints of the ER subproblems that corresponds to the original inequality constraints. 
The proposed algorithm addresses the aforementioned issues by exploiting the
\textit{inequality tightened} (IT) subproblems.
The IT subproblems differ from the ER subproblems in the definition of the feasibility region, which is tightened by a quantity 
$\gamma_t(\epsilon_t,\epsilon_{z_t})$ that depends on both $\epsilon_t$ and $\epsilon_{z_t}$. 
The last step (\textit{set solver accuracy}) selects a suboptimality level $\eta_t$ for each subproblem, 
that guarantees a primal feasible and suboptimal solution for the original MPC problem within a fixed 
number of iterations. 
\section{Subproblem reformulation}
\label{sec:subproblem_reformulation}
In the following, we introduce the ER subproblems and the proposed algorithm to
solve them using $N+1$ parallel workers. Furthermore, we introduce an initial
formulation of the IT subproblems and derive conditions on the choice of the
relaxation and tightening parameters to guarantee primal feasible and suboptimal solutions for of each
subproblem.  
\vspace{-0.2cm}
\subsection{Equality constraint relaxation}
\label{subsec:equality_relaxation}
Our goal is to obtain a solution for Problem~\eqref{eq:consensus_problem} by solving the 
independent subproblems~\eqref{eq:subproblem} in parallel using inexact solvers, such
as the Nesterov's DFG~\cite{Nesterov1}. In order to use our proposed solver
(introduced in Section~\ref{sec:alg1}), which relies on first-order methods, we
introduce a reformulation of Problem~\eqref{eq:consensus_problem} to take
into account that the constraints~\eqref{eq:consensus_1}
and~\eqref{eq:consensus_2} cannot be satisfied at the equality due to the
iterative nature of the proposed solver and its asymptotic
convergence properties.
In particular, introducing the
relaxation parameters $\epsilon_{z_{t}},\!\epsilon_{z_{t+1}}\!\!>\!\!0$, for
each subproblem ($\timeint{0}{N-1}$), the former equality
constraints~\eqref{eq:consensus_1}-\eqref{eq:consensus_2} are replaced by the
following inequality constraints:
\vspace{-0.2cm}
\begin{subequations}
\label{eq:relaxed_consensus_inequ}
\begin{align}
|H_1y_t- z_{t}| \leq \epsilon_{z_{t}}\1_n&\Leftrightarrow |x_{t}^{(t)}-z_t| \leq
\epsilon_{z_{t}}\1_n, \label{eq:relaxed_consensus_1}\\
|H_2y_t\!-\! z_{t+1}| \!\leq\! \epsilon_{z_{t+1}}\1_n&\Leftrightarrow
|x_{t+1}^{(t)}\!-\! z_{t+1}| \leq\epsilon_{z_{t+1}}\1_n. \label{eq:relaxed_consensus_2}
\end{align}
\end{subequations}
Thus, for each subproblem, we can realistically consider a feasible region defined by the following constraints:
\vspace{-0.2cm}
\begin{subequations}
\begin{align}
\label{eq:original_inequality_constraints}
&[~G_t~~~~0 ~~~~~~0]~\xi_t+~g_t~~~~~~\leq 0,\\
\label{eq:relaxed_equality_constraints_w_plus}
&[~H_1~-I_n~~~~0]~~\xi_t-\epsilon_{z_t}\mathbf{1}_{n}~~\leq 0,\\
\label{eq:relaxed_equality_constraints_w_minus}
&[-H_1 ~~~ I_n ~~~~ 0]~~\xi_t-\epsilon_{z_t}\mathbf{1}_{n}~~\leq 0,\\
\label{eq:relaxed_equality_constraints_v_plus}
&[~H_2 ~~~~ 0~-I_n]~~\xi_t-\epsilon_{z_{t+1}}\mathbf{1}_{{n}}\leq 0,\\
\label{eq:relaxed_equality_constraints_v_minus}
&[-H_2~~~0~~~~ I_n]~\xi_t-\epsilon_{z_{t+1}}\mathbf{1}_{{n}}~\leq 0,
\end{align}
\end{subequations}
or, in a more compact notation:
\begin{equation}
\label{eq:relaxed_inequality_constraints}
G_{\xi_t} \xi_t+g_{\xi_t} \leq 0,
\end{equation}
where $G_{\xi_t}\in \mathbb R^{p_{\xi_t}\times (n+m)+2n}$ and $p_{\xi_t}
:=p_t+4n$.

In the remaining of the paper, we consider the following {\it equality relaxed}
(ER) subproblems:
\begin{align}
\label{eq:relaxed_subproblem}
\bfV_t^*={\underset{\xi_t}{\text{min}}}~ \bfV_t(\xi_t)~\text{s.t.:}~ G_{\xi_t}
\xi_t+g_{\xi_t} \!\leq\! 0,~\timeint{0}{N}.
\vspace{-1cm}
\end{align}
Hence, let 
$\mu_t:=[\lambda_t^T~{w_t^+}^T~{w_t^-}^T~{v_{t+1}^+}^T~{v_{t+1}^-}^T]^T\in \mathbb R^{p_{\xi_t}}_+$ 
be the Lagrange multiplier associated with the new set of inequality constraints defined 
by~\eqref{eq:relaxed_inequality_constraints}, where $\lambda_t$, ${w_t^+}$ ${w_t^-}$, 
${v_{t+1}^+}$, and ${v_{t+1}^-}$ are the multipliers associated with the 
constraints~\eqref{eq:original_inequality_constraints},
~\eqref{eq:relaxed_equality_constraints_w_plus},
~\eqref{eq:relaxed_equality_constraints_w_minus},
~\eqref{eq:relaxed_equality_constraints_v_plus}, 
and~\eqref{eq:relaxed_equality_constraints_v_minus}, respectively. 
Then, to handle the complicated
constraints~\eqref{eq:relaxed_inequality_constraints}, define, for each subproblem, the dual 
function $d_t(\xi_t,\mu_t)$ as follows:
\begin{equation}
\label{eq:dual_fun}
d_t(\xi_t,\mu_t) = {\underset{\xi_t}{\text{min}}} ~\mathcal{L}_{\mu_t}(\xi_t),
\end{equation}
where $\mathcal{L}_{\mu_t}(\xi_t) := \bfV_t(\xi_t)+\mu^T_{t}\text{diag}\{I_{p_t},\rho I_{4n}\}(G_{\xi_t}\xi_t+g_{\xi_t})$. 
We refer to \eqref{eq:dual_fun} as the {\it inner subproblem}. Hence, we aim to
solve the following dual subproblems ({\it outer subproblem}) in parallel to obtain
a solution for Problem~\eqref{eq:consensus_problem}:
\begin{equation}
\label{eq:dual_probl}
d_t^*= {\underset{\mu_t}{\text{max}}} ~{d}_t(\xi_t,\mu_t),~\timeint{0}{N}.
\end{equation}
\begin{rem}
\label{rem:modularity}
The size of the ER subproblems remains unaffected if $N$ increases. Intuitively,
this {\it modularity} is an additional feature that can be exploited to
preserve some favorable numerical properties of the problem (e.g., conditioning,
Lipschitz constant, etc.) even when the algorithm is running in serialized mode.
\end{rem}
\subsection{Parallel dual fast gradient method}
\label{subsec:PDFG}
This section introduces Algorithm~\ref{alg:Alg1} that we use 
to solve the MPC problem~\eqref{eq:consensus_problem} exploiting $N+1$ parallel
workers $\Pi_t$. Note that, at this stage, we cannot yet ensure that the
computed solution is feasible and suboptimal for Problem~\eqref{eq:initial_problem}.
 
Algorithm~\ref{alg:Alg1} relies on Nesterov's DFG in
which the inner problem is solved in an ADMM fashion, as explained below. 
Specifically, \ifdraft {\color{black} as Figure~\ref{fig:alg1} depicts,}~\fi at each iteration of
the algorithm\footnote{Note that $[\xi_0^0]_{1:n}=x_0$ and
$\mu_t^0 =0_{p_{\xi_t}}$ to initialize Algorithm~\ref{alg:Alg1}.}, $\Pi_t$ computes a minimizer $\xi_t^k$ for
$\mathcal{L}_{\mu_t}(\xi_t)$ (steps 1-4), i.e., the algorithm returns a solution for each inner 
 subproblem~\eqref{eq:dual_fun}. In particular, our algorithm, 
 in compliance with the ADMM strategy, first minimizes $\mathcal L_{\mu_t}$ with
  respect to $y_t$ in parallel for each subproblem (step 1). Then, using the
  information received by $\Pi_{t+1}$, i.e., the updated value of
  $y^{k+1}_{t+1}$ (synchronization step 2), our algorithm computes---in
  parallel for each subproblem---the value of the global variable $z_t$ 
  according to the following rule (step 3):
\begin{equation}
\label{eq:update_z}
z_t^{k+1}\!=\!
\frac{1}{2}({H_1y_{t}^{k+1}\!+\!H_2y_{t-1}^{k+1}\!+\!v_t^+\!+\!w_t^+\!-\!v_t^-\!-\!w_t^-}).
\end{equation}
Note that this strategy allows to handle the coupling introduced by the 2-norm in the cost function of~\eqref{eq:relaxed_subproblem}. 
Then, (synchronization step 4) $\Pi_t$ receives (sends) the updated value of
$z_{t+1}^{k+1}$ ($z_t^{k+1}$) from $\Pi_{t+1}$ (to $\Pi_{t-1}$), respectively.
Finally, the worker~$\Pi_t$ computes the new values of the multipliers
$\mu_t^{k+1}$ (steps 5-7). We compute offline (for each subproblem) the
Lipschitz constant $L_{\mu_t}$ associated with $\nabla_{\mu_t}
d_t(\xi_t,\mu_t)$ to perform the multipliers' update:
\vspace{-0.1cm}
\begin{itemize}
\item $L_{\mu_0}=  \text{diag}
\bigg\{\frac{ \|G_0\|^2_2}{\textrm{eig}_{\textrm{min}}(\mathbf Q_0)}I_{p_0},\frac{ \|\rho\text{diag}\{ H_2,-I\}\|^2_2}{\textrm{eig}_{\textrm{min}}(\mathbf
Q_0)}I_{2n}\bigg\}$.
\item $ L_{\mu_t}=  \text{diag}
\bigg\{\frac{ \|G_t\|^2_2}{\textrm{eig}_{\textrm{min}}(\mathbf Q_t)}I_{p_t},\frac{ \|\rho\text{diag}\{ H_1, -I\}\|^2_2}{\textrm{eig}_{\textrm{min}}(\mathbf
Q_t)}I_{2n},\\
\frac{ \|\rho\text{diag}\{ H_2,-I\}\|^2_2}{\textrm{eig}_{\textrm{min}}(\mathbf
Q_t)}I_{2n}\bigg\}$, ($t=1,\ldots,N-1$).
\item $
L_{\mu_N}=  \text{diag}
\bigg\{\frac{ \|G_N\|^2_2}{\textrm{eig}_{\textrm{min}}(\mathbf Q_N)}I_{p_N},
\frac{ \|\rho\text{diag}\{ H_1, -I\}\|^2_2}{\textrm{eig}_{\textrm{min}}(\mathbf
Q_N)}I_{2n}\bigg\}$.
\end{itemize}
\captionsetup[algorithm]{font=small}
\begin{algorithm}[t]
\fontsize{8}{8}\selectfont
\begin{algorithmic} 
\State{Given $\xi_t^0, \mu_t^0,Q_t, G_t,g_t, L_{\mu_t}$, and $\bar k_t$ for each
$\Pi_t$~(\timeint{0}{N})}
\While{$k\le\bar k_t$}
 \State{{1.}~$\Pi_t$ computes $y_t^{k+1}\!
 =\!\operatorname{argmin}_{y_t}~\mathcal{L}_{\mu_t}(\xi_t^k,\mu_t^k).$}
\State{{2.}~$\Pi_t$ receives $y_{t+1}^{k+1}$ from $\Pi_{t+1}$, send
$y_{t}^{k+1}$ to $\Pi_{t-1}$.}
 \State{{3.}~$\Pi_t$ updates $z_t^{k+1}$ according to~\eqref{eq:update_z}.}
 \State{{4.}~$\Pi_t$ receives $z_{t+1}^{k+1}$ from $\Pi_{t+1}$, sends
 $z_t^{k+1}$ to $\Pi_{t-1}$.} \State{{5.}~$\Pi_t$ computes
\vspace{-0.3cm} 
 \[\hat \mu_t^{k+1}=\bigg[\mu^k_t+L_{\mu_t}^{-1}\nabla^T_{\mu_t}
 d_t(\xi_t^{k+1},\mu_t)\bigg]_{+}.\]
 \vspace{-0.3cm} }
  \State{{6.}~Define: $a:=\! \frac{k+1}{k+3}I$, $b_{\mu_t}\!:=\! \!L_{\mu_t}^{-1}\frac{2}{(k+3)}$.}
  \State{{7.}~$\Pi_t$ computes: 
  \vspace{-0.3cm} 
 \[\mu_t^{k+1}\!=\!a\hat
 \mu_t^{k+1}\!+\!b_{\mu_t}\bigg[\!\sum_{s=0}^{k}\frac{s+1}{2}\nabla^T_{\mu_t}
 d_t(\xi_t^{s},\mu_t)\bigg]_{+}.\] }
 \EndWhile
 \caption{Parallel Dual Fast Gradient Method.}
 \label{alg:Alg1}
 \end{algorithmic} 
\end{algorithm} 
\label{sec:alg1}   
\ifdraft
\begin{figure}[t]
   \centering
    \includegraphics[width=1\columnwidth]{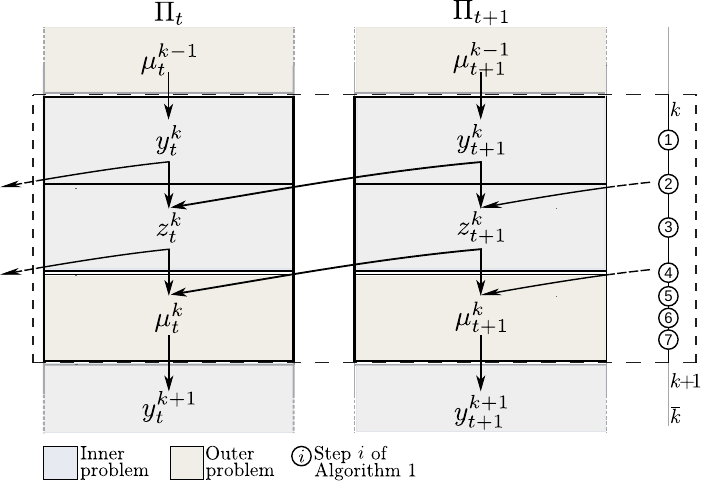}
    \caption{\color{black} Iteration $k$ of Algorithm~\ref{alg:Alg1}.}
   \label{fig:alg1}
 \end{figure}
\fi
Note that our update rule is different from the one proposed in~\cite{KogelCCA2011}, 
where ADMM is used in combination with Nesterov's fast gradient methods. At each
iteration, the algorithm proposed in~\cite{KogelCCA2011}, first computes the
exact minimizer $y_t$ and then updates $v_t$ and $w_t$. Our algorithm does not
wait until the DFG returns a minimizer $y_t$ to update the multipliers $v_t$ and
$w_t$, but starts updating their values along with the DFG iterations,
encouraging the information exchange between neighboring subproblems. This
algorithm is also different from the one proposed in~\cite{StathopoulosECC13}.
In particular, the workers exchange the necessary pieces of information
before the update of the Lagrange multipliers and none of the dual variables is
exchanged between the neighboring workers\ifdraft{\color{black}, as
Figure~\ref{fig:alg1} highlights}\fi.
Furthermore, the information exchange between neighboring workers is
unidirectional, i.e., $\Pi_{t+1}$ sends the updated information to $\Pi_t$, but
$\Pi_t$ does not send any updated information to $\Pi_{t+1}$.

Using an argument similar to the one of Theorem 1 in~\cite{NecoaraOCAM}, we can compute the primal feasibility 
violation and the level of suboptimality of the solution of each ER subproblem
returned by Algorithm~\ref{alg:Alg1}.
\vspace{-0.2cm}
\begin{thm}
\label{thm:thm1}
(\cite{NecoaraOCAM}) Let $\mathbf{V}_t(\xi_t)$ be strongly convex, the sequences
($\boldsymbol \xi_t ^k$, $\hat \mu_t^k$, $\mu_t^k$) be generated by Algorithm~\ref{alg:Alg1}, and
$\boldsymbol {\hat \xi}_t^k :=
\sum_{s=0}^k\frac{2(s+1)}{(k+1)(k+2)}\boldsymbol \xi^s$. Then, an estimate on
the primal feasibility violation for the ER subproblem~\eqref{eq:relaxed_subproblem} is given by the following:
\begin{equation}
\label{eq:primal_feasibility_violation}
\|[\nabla_{\mu_t}^T d_t(\boldsymbol{\hat\xi}_t^{k})]_+\|\le
\frac{8R_{t}{\max\{L_{\mu_t}\}}}{(k+1)^2}=:\eta_t,
\end{equation}
where $R_t:=\|\mu^*_t\|$ .
Moreover, an estimate on primal suboptimality is given by the following:
\begin{equation}
\label{eq:primal_suboptimality}
0\leq \mathbf V_t^* - \mathbf V_t(\boldsymbol{\hat
\xi}_t) \leq  R_{t}\eta_t.
\end{equation}
\end{thm}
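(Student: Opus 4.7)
The plan is to follow the template of Theorem~1 in~\cite{NecoaraOCAM}, adapting it to our setting where the dual variables are partitioned and the Lipschitz constants form a block-diagonal matrix $L_{\mu_t}$. Since $\mathbf{V}_t$ is strongly convex on $\xi_t$ (which follows from $Q\in\possemdef{n}$, $R\in\posdef{m}$, and the quadratic ADMM penalty terms making $\mathcal Q_t\in\posdef{}$), the dual function $d_t(\mu_t)$ is concave and differentiable, and the block-components of $\nabla_{\mu_t} d_t$ are Lipschitz continuous with constants matching the definition of $L_{\mu_t}$ given just before Algorithm~\ref{alg:Alg1}. This is the standard fact $L = \|A\|^2/\sigma_{\min}(\mathcal Q)$ applied to each block of constraints appearing in $G_{\xi_t}$, which determines the three (resp.\ two) diagonal entries of $L_{\mu_t}$ at the boundary subproblems and interior subproblems.

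With the Lipschitz structure in hand, the next step is to apply Nesterov's accelerated (fast) gradient scheme on the dual maximization problem~\eqref{eq:dual_probl}. Steps 5--7 of Algorithm~\ref{alg:Alg1} are exactly Nesterov's constant-step scheme with averaging weights $\frac{s+1}{2}$ and relaxation coefficient $a=\frac{k+1}{k+3}$. The standard estimate sequence argument yields, for any feasible $\mu$,
\begin{equation*}
\tfrac{(k+1)(k+2)}{4}\bigl(d_t^*-d_t(\hat\mu_t^{k+1})\bigr)\;\le\;\tfrac{1}{2}\|\mu-\mu_t^0\|_{L_{\mu_t}}^{2},
\end{equation*}
where $\|\cdot\|_{L_{\mu_t}}$ denotes the norm weighted by the diagonal Lipschitz matrix. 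Taking $\mu=\mu_t^*$ and using $\mu_t^0=0$ gives a dual suboptimality bound of order $\max\{L_{\mu_t}\}\,R_t^2/(k+1)^2$.

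To convert this dual guarantee into the primal feasibility and suboptimality statements of the theorem, I would use the primal-averaging trick of Nesterov/Necoara: because each $\xi_t^{k+1}$ is the unique minimizer of $\mathcal L_{\mu_t^k}$, convexity of $\mathbf V_t$ combined with the definition of the averaged iterate $\hat\xi_t^k := \sum_{s=0}^k \frac{2(s+1)}{(k+1)(k+2)}\xi_t^s$ gives
\begin{equation*}
\mathbf V_t(\hat\xi_t^k) + \langle \hat\mu_t^{k},\,G_{\xi_t}\hat\xi_t^k+g_{\xi_t}\rangle \;\le\; d_t(\hat\mu_t^{k})\;\le\;\mathbf V_t^*.
\end{equation*}
Combining this with the identity $\nabla_{\mu_t}d_t(\hat\xi_t^k) = G_{\xi_t}\hat\xi_t^k + g_{\xi_t}$ (up to the $\text{diag}\{I,\rho I\}$ weighting absorbed into the norm) and the optimality condition $\langle \mu_t^*, [G_{\xi_t}\hat\xi_t^k+g_{\xi_t}]_+\rangle \le \|\mu_t^*\|\cdot\|[\nabla_{\mu_t}d_t(\hat\xi_t^k)]_+\|$ yields the bound~\eqref{eq:primal_feasibility_violation}, whose square is absorbed into the same $1/(k+1)^2$ rate obtained on the dual (with the constant $8\max\{L_{\mu_t}\}R_t$). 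The suboptimality estimate~\eqref{eq:primal_suboptimality} then follows by weak duality on the left-hand side and by bounding $|\langle\mu_t^*, G_{\xi_t}\hat\xi_t^k+g_{\xi_t}\rangle|\le R_t\eta_t$ on the right.

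The main obstacle is the bookkeeping: propagating the weighted norm $\|\cdot\|_{L_{\mu_t}}$ through the argument and verifying that $\max\{L_{\mu_t}\}$ (the largest diagonal entry) is the correct scalar constant to appear in the feasibility bound, rather than, say, the trace or a sum of block norms. A careful reuse of the Necoara-style estimate sequence---noting that the dual variables associated with the original inequality constraints and those associated with the consensus-relaxation constraints have \emph{different} Lipschitz block weights because of the $\text{diag}\{I_{p_t},\rho I_{4n}\}$ factor in $\mathcal L_{\mu_t}$---should show that the worst-case block dominates, justifying the use of $\max\{L_{\mu_t}\}$.
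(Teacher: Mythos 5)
Your overall route is the right one: the paper itself gives no proof of Theorem~\ref{thm:thm1} (it is imported verbatim from Theorem~1 of \cite{NecoaraOCAM}), so the benchmark is exactly the standard dual-fast-gradient analysis you outline — strong convexity of $\bfV_t$ giving a Lipschitz dual gradient with the block constants $L_{\mu_t}$, Nesterov's scheme on the outer problem~\eqref{eq:dual_probl}, and primal averaging with weights $\tfrac{2(s+1)}{(k+1)(k+2)}$. However, as written your argument has a false intermediate step and omits the step that actually produces the bound~\eqref{eq:primal_feasibility_violation}. The chain $\bfV_t(\boldsymbol{\hat\xi}_t^k)+\langle\hat\mu_t^k,G_{\xi_t}\boldsymbol{\hat\xi}_t^k+g_{\xi_t}\rangle\le d_t(\hat\mu_t^k)$ is backwards: since $d_t(\mu)=\min_{\xi}\mathcal L_{\mu}(\xi)$, the definition gives precisely the reverse inequality for any point that is not the minimizer at $\hat\mu_t^k$, and the averaged iterate is not that minimizer. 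The correct bridge is the estimate-sequence inequality of the fast gradient method, which holds for \emph{all} $\mu\geq 0$ and carries a quadratic penalty,
\begin{equation*}
d_t(\hat\mu_t^{k})\;\geq\;\bfV_t(\boldsymbol{\hat\xi}_t^{k})+\big\langle \mu,\;\text{diag}\{I_{p_t},\rho I_{4n}\}\big(G_{\xi_t}\boldsymbol{\hat\xi}_t^{k}+g_{\xi_t}\big)\big\rangle\;-\;\tfrac{2\max\{L_{\mu_t}\}}{(k+1)(k+2)}\,\|\mu\|^{2},
\end{equation*}
obtained by linearizing $d_t$ at each $\mu_t^s$ (where $d_t(\mu_t^s)+\langle\nabla d_t(\mu_t^s),\mu-\mu_t^s\rangle=\bfV_t(\xi_t^s)+\langle\mu,\cdot\rangle$) and averaging; the $\|\mu\|^2/(k+1)^2$ term you dropped is exactly what generates the rate in the theorem.

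Two consequences follow. First, the feasibility estimate is not obtained from Cauchy--Schwarz with $\mu_t^*$ alone, as you assert; one must maximize the display above over the dual ball $\{\mu\geq 0:\|\mu\|\leq 2R_t\}$ (equivalently test it at $\mu_t^*$ plus a unit multiple of $[\nabla_{\mu_t}^Td_t(\boldsymbol{\hat\xi}_t^k)]_+$) and combine it with the saddle-point inequality $\bfV_t^*\leq\bfV_t(\boldsymbol{\hat\xi}_t^k)+\langle\mu_t^*,[\nabla_{\mu_t}^Td_t(\boldsymbol{\hat\xi}_t^k)]_+\rangle$; only this pairing isolates $\|[\cdot]_+\|$ and yields the constant $8R_t\max\{L_{\mu_t}\}$ in~\eqref{eq:primal_feasibility_violation}. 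Second, the left inequality in~\eqref{eq:primal_suboptimality} is not ``weak duality'' (the average $\boldsymbol{\hat\xi}_t^k$ is in general infeasible); it follows from the same master inequality evaluated at $\mu=0$, giving $\bfV_t(\boldsymbol{\hat\xi}_t^k)\leq d_t(\hat\mu_t^{k})\leq d_t^*\leq\bfV_t^*$, while the right inequality follows from the saddle-point bound as you indicate. Finally, be explicit that the analysis presupposes $\xi_t^{k+1}$ is the exact minimizer of $\mathcal L_{\mu_t^k}$, whereas steps 1--4 of Algorithm~\ref{alg:Alg1} perform a Gauss--Seidel/ADMM-style update of $(y_t,z_t,z_{t+1})$ using neighbor information; the paper inherits this assumption tacitly from \cite{NecoaraOCAM}, so your proof should state it rather than treat it as automatic.
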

Algorithm~\ref{alg:Alg1} terminates after a fixed number of iterations that depends on
$\eta_t$ and $R_t$~\cite{NecoaraOCAM}:
\begin{equation}\label{eq:iter_number} \bar k_t :=\bigg\lfloor
\sqrt{{8R_{t}\eta_t^{-1}{\max\{L_{\mu_t}\}}}}\bigg\rfloor.\end{equation}
\subsection{Tightening of the original inequality constraints}
\label{subsec:up_lagrange_multiplier}  
In order to guarantee the primal feasibility of each subproblem using
Algorithm~\ref{alg:Alg1}, we introduce $N+1$ auxiliary subproblems,
namely the {\it inequality tightened} (IT) subproblems, which differ from the
ER subproblems~\eqref{eq:relaxed_subproblem} in the definition of the feasible
region. In particular, each IT subproblem can be defined as follows:
\begin{equation}
\bfV_{\epsilon_t}^*\!=\!\min_{\xi_{t}}
\bfV_t(\xi_{t})~\text{s.t.:}~ G_{\xi_t}
\xi_{t}+g_{\xi_t}+\mathbf{\epsilon}_{t}\mathbf{1}_{p_t+4n} \leq 0,
\label{eq:tightened_subproblem}
\end{equation}
where $\epsilon_t\ge 0$ is the tightening parameter, which depends on the
suboptimality level~$\eta_t$ that the proposed algorithm
can reach within $\bar k_t$ iterations~\eqref{eq:iter_number}. According
to~\cite{NecoaraOCAM}, solving~\eqref{eq:tightened_subproblem} using
Algorithm~\ref{alg:Alg1} ensures, with a proper choice of $\epsilon_t$, 
that the solution of~\eqref{eq:tightened_subproblem} is primal feasible and suboptimal for
subproblem~\eqref{eq:relaxed_subproblem}.

To define an $\epsilon_t$ similar to the one introduced in~\cite{NecoaraOCAM},
we must compute an upper bound for the optimal Lagrange multiplier, namely
$\mu^*_{t,\epsilon_t}$, associated with the IT subproblem~\eqref{eq:tightened_subproblem}. We use an argument similar to the one of Lemma 1 in~\cite{Nedic}. 
In particular, we compute the aforementioned upper bound for
$\mu^*_{t,\epsilon_t}$ according to the following lemma.
\begin{lem}
\label{lem:lemma_up_bound}
Assume that there exists a Slater vector $\tilde y_t\in\mathbb{R}^{n+m}$ such
that $G_t\tilde y_t+g_t< 0$. Then, there exists $\epsilon_t\geq 0$,
$\epsilon_t<{\min}_{j=1,\ldots,p_t}\{-(G_t\tilde y_t
+g_t)_j\}$, $\epsilon_{z_t},\epsilon_{z_{t+1}}> \epsilon_t$, such that the
upper bound for $\mu^*_{t,\epsilon_t}$ is given by
\begin{equation}
\label{eq:Rd_definition}
\|\mu_{t,\epsilon_t}^*\|\leq 2{R_{d_t}}:= 2{\frac{\mathbf V_t(\tilde
\xi_t)-d_t(\tilde
\mu_t)}{\min\limits_{j=1,\ldots,p_t+2n}\{[\Gamma_t]_j\}}},
\end{equation}
where $\Gamma_t\!\! := \!\!\!\big[[-(G_t\tilde y_t
\!+\!g_t)^T\!-\!\epsilon_t\1_{p_t}^T] [2\rho(\epsilon_{z_t}-\epsilon_t)\1_{n}^T]
[2\rho(\epsilon_{z_{t+1}}-\epsilon_t)\1_{n}^T]\big]^T\in \mathbb R^{p_t+2n}$,
and $d_t(\tilde\mu_t)$ is the dual function for the original
subproblem~\eqref{eq:dual_probl} evaluated at $\tilde \mu_t\in \mathbb
R^{p_t+4n}$.
\end{lem}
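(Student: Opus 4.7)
My strategy is the classical Slater-based bound on dual optimal multipliers in the spirit of Nedi\'c--Ozdaglar (the Lemma~1 in~\cite{Nedic} the authors cite). The proof pivots on three ingredients: (i) explicit construction of a strictly feasible point $\tilde \xi_t$ for the IT subproblem~\eqref{eq:tightened_subproblem} from the given Slater vector $\tilde y_t$ for the original inequality, (ii) strong duality for the IT subproblem, and (iii) weak duality applied to the (non-tightened) ER dual $d_t$ at an arbitrary $\tilde \mu_t\ge 0$ to obtain a lower bound on $\mathbf V_{\epsilon_t}^*$.

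\textbf{Step 1: a Slater point for the tightened problem.} Set $\tilde\xi_t:=[\tilde y_t^{\,T},\,\tilde z_t^{\,T},\,\tilde z_{t+1}^{\,T}]^T$ with $\tilde z_t:=H_1\tilde y_t$ and $\tilde z_{t+1}:=H_2\tilde y_t$. Plugging into the block rows of $G_{\xi_t}\xi_t+g_{\xi_t}+\epsilon_t\mathbf 1_{p_t+4n}\le 0$ gives componentwise slacks: $-(G_t\tilde y_t+g_t)_j-\epsilon_t>0$ for the original inequality block (by the hypothesis $\epsilon_t<\min_j\{-(G_t\tilde y_t+g_t)_j\}$), and $\epsilon_{z_t}-\epsilon_t>0$ and $\epsilon_{z_{t+1}}-\epsilon_t>0$ for the four consensus-relaxation blocks (by $\epsilon_{z_t},\epsilon_{z_{t+1}}>\epsilon_t$). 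Hence $\tilde\xi_t$ is strictly feasible for~\eqref{eq:tightened_subproblem}, Slater's condition holds, and strong duality gives $\mathbf V_{\epsilon_t}^*=d_t^{\text{IT}}(\mu_{t,\epsilon_t}^*)$ where $d_t^{\text{IT}}$ denotes the dual function of the tightened problem.

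\textbf{Step 2: bounding $\|\mu_{t,\epsilon_t}^*\|$ using the Slater slack.} Since the dual function is the pointwise infimum of the Lagrangian, $d_t^{\text{IT}}(\mu_{t,\epsilon_t}^*)\le \mathcal L_{\mu_{t,\epsilon_t}^*}(\tilde\xi_t)+\epsilon_t(\mu_{t,\epsilon_t}^*)^T\mathrm{diag}\{I_{p_t},\rho I_{4n}\}\mathbf 1$. Writing $\mu_{t,\epsilon_t}^*=[\lambda_t^T,{w_t^+}^T,{w_t^-}^T,{v_{t+1}^+}^T,{v_{t+1}^-}^T]^T$ and substituting the componentwise slacks computed in Step~1, one obtains
\begin{equation*}
\mathbf V_t(\tilde\xi_t)-\mathbf V_{\epsilon_t}^*\ \ge\ \sum_{j=1}^{p_t}\lambda_{t,j}\,[-(G_t\tilde y_t+g_t)_j-\epsilon_t]+\rho(\epsilon_{z_t}\!-\!\epsilon_t)\mathbf 1^T(w_t^++w_t^-)+\rho(\epsilon_{z_{t+1}}\!-\!\epsilon_t)\mathbf 1^T(v_{t+1}^++v_{t+1}^-).
\end{equation*}
Grouping the pairs $(w_t^+,w_t^-)$ and $(v_{t+1}^+,v_{t+1}^-)$ so that the scalar $2\rho(\epsilon_{z_t}-\epsilon_t)$ (resp.\ $2\rho(\epsilon_{z_{t+1}}-\epsilon_t)$) multiplies the average of each pair, the right-hand side is lower bounded by $\tfrac12\,\min_j[\Gamma_t]_j\,\|\mu_{t,\epsilon_t}^*\|_1$ with $\Gamma_t$ as defined in the statement. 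The factor $\tfrac12$ is the source of the~$2$ in the final estimate; switching from $\ell^1$ to the Euclidean $\|\cdot\|$ used in~\eqref{eq:Rd_definition} only strengthens the inequality.

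\textbf{Step 3: replacing $\mathbf V_{\epsilon_t}^*$ by $d_t(\tilde\mu_t)$.} To recover the numerator in~\eqref{eq:Rd_definition} we lower-bound $\mathbf V_{\epsilon_t}^*$ by $d_t(\tilde\mu_t)$ for any $\tilde\mu_t\ge 0$. By weak duality applied to the tightened problem, $\mathbf V_{\epsilon_t}^*\ge d_t^{\text{IT}}(\tilde\mu_t)=d_t(\tilde\mu_t)+\epsilon_t\tilde\mu_t^T\mathrm{diag}\{I_{p_t},\rho I_{4n}\}\mathbf 1\ge d_t(\tilde\mu_t)$. Substituting this into the inequality from Step~2 and solving for $\|\mu_{t,\epsilon_t}^*\|$ yields~\eqref{eq:Rd_definition}.

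\textbf{Anticipated obstacle.} The main bookkeeping difficulty is matching the block structure of $\mu_t$ (five blocks of multipliers with the $\mathrm{diag}\{I_{p_t},\rho I_{4n}\}$ scaling) to the compressed vector $\Gamma_t\in\mathbb R^{p_t+2n}$: the pair of $(\pm)$ consensus constraints must be collapsed into a single entry, and this is precisely where the factor $2\rho$ (and, downstream, the factor $2$ in front of $R_{d_t}$) originates. Checking that every Slater slack is captured consistently, and that neither the $\rho$ nor the factor $2$ is double counted when passing from $\ell^1$ to Euclidean norms, is the routine but error-prone step.
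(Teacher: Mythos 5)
Your proposal is correct and follows essentially the same route as the paper's own proof: the Nedi\'c--Ozdaglar Slater-slack argument with the Slater point $\tilde\xi_t=[\tilde y_t^T\,(H_1\tilde y_t)^T\,(H_2\tilde y_t)^T]^T$, the componentwise slacks $-(G_t\tilde y_t+g_t)-\epsilon_t\1_{p_t}$ and $\rho(\epsilon_{z_t}-\epsilon_t)$, $\rho(\epsilon_{z_{t+1}}-\epsilon_t)$, the lower bound $d_t(\tilde\mu_t)$ from the untightened dual, and the factor $2$ arising from collapsing each $(\pm)$ multiplier pair into a single entry of $\Gamma_t$. The only cosmetic difference is bookkeeping: you route the argument through strong duality for the IT subproblem and a $\tfrac12\|\mu^*_{t,\epsilon_t}\|_1$ grouping, whereas the paper writes a single chain of inequalities and handles the pairs via $\max\{w^+_{t,\epsilon_t},w^-_{t,\epsilon_t}\}$, $\max\{v^+_{t+1,\epsilon_t},v^-_{t+1,\epsilon_t}\}$.
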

\begin{proofs}
\ifdraft
See Appendix~\ref{app:lemma1}.
\else
See Appendix I in~\cite{TechReport}.
\fi
\end{proofs}
\begin{rem} Lemma~\ref{lem:lemma_up_bound} does not only provide an upper bound
for $\|\mu_{t,\epsilon_t}^*\|$, but it also provides guidelines to select the
values of $\epsilon_{z_{t}}$ and $\epsilon_{z_{t+1}}$ as a function of 
  ${\min}_{j=1,\ldots,p_t}\{-(G_{t}\tilde y_t+g_{t})_j\}$, which only depends on
  the primal variable $\tilde y_t$. An alternative way to determine the relaxation parameters is to
  include $\epsilon_{z_{t}}$ and $\epsilon_{z_{t+1}}$  in the set of decision variables and 
  penalize them in the cost function as it is usually done to handle soft
  constraints. This will, however, increase the number of decision
  variables in the problem formulation and it will have an impact on the original cost.
\end{rem}   
\section{Tightening improvement to guarantee primal feasible
consolidated predictions}
\label{sec:tightening_improvements}
\ifdraft
\begin{figure} 
   \centering
    \includegraphics[width=1\columnwidth]{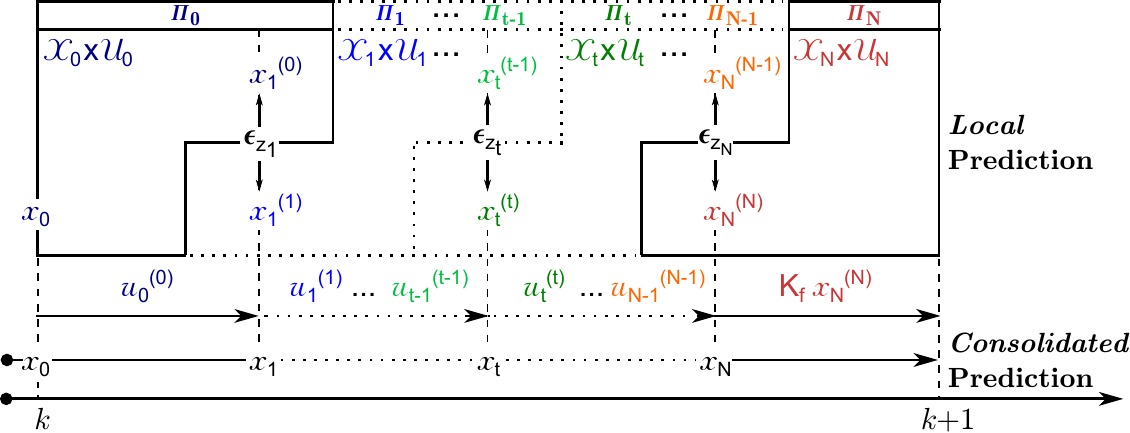}
    \caption{\color{black}\textit{Local} and \itglob~predictions. The different colors
    highlight the different subproblems (e.g., the dark blue color refers
    to the subproblem handled by worker~$\Pi_0$, the blue color refers to
    subproblem handled by worker~$\Pi_1$, etc.).}
   \label{fig:local_global_predictions}
 \end{figure}
 \fi 
The previous section showed how to choose the tightening parameter $\epsilon_t$
of each IT subproblem to ensure that the $t$-th \textit{local} solution,
i.e., the solution computed by the $t$-th IT
subproblem~\eqref{eq:tightened_subproblem}, is primal feasible for the $t$-th ER subproblem. This section provides guidelines to improve the choice
of the tightening parameter of each IT
subproblem~\eqref{eq:tightened_subproblem} in order to guarantee the primal
feasibility of the \itglob solution, i.e., the predictions obtained, starting from the initial state $x_0$, using the control sequence 
\vspace{-0.2cm}
\begin{equation}
\label{eq:tightened_control}
 \bar\ubf_{\epsilon}:=
\{\bar u_{0,{\epsilon_0}}^{(0)},\ldots,\bar
u_{N-1,\epsilon_{N-1}}^{(N-1)}\},\end{equation} where the elements of
$\bar\ubf_{\epsilon}$ are computed by the independent IT
subproblems~\eqref{eq:tightened_subproblem}.\ifdraft{\color{black}~Figure~\ref{fig:local_global_predictions} highlights the difference between the local and the consolidated prediction.}\fi ~In particular, when a new measurement is available from the plant, the subsystems~\eqref{eq:tightened_subproblem} compute in parallel $(x_{0},
\bar u_{0,\epsilon_0}^{(0)}),$ $\ldots$,$(
\bar x_{N-1,\epsilon_{N-1}}^{(N-1)},\bar u_{N-1,\epsilon_{N-1}}^{(N-1)}),$ and
$x_{N,\epsilon_N}^{(N)}$, respectively.
According to the results of the previous section, the pair
$(\bar x_{t,\epsilon_t}^{(t)}, \bar u_{t,\epsilon_{t}}^{(t)})$ is primal
feasible for the $t$-th subproblem~\eqref{eq:relaxed_subproblem}, thanks to the introduction of the IT subproblems. Nevertheless, due to the relaxation introduced on the equality constraints~\eqref{eq:relaxed_consensus_1}-\eqref{eq:relaxed_consensus_2}, there is a bounded mismatch between $x_{t+1}^{(t)}$ and $x_{t+1}^{(t+1)}$ ($\timeint{0}{N-1}$). Hence, starting from the initial state $x_0$, when the control sequence $\bar \ubf_{\epsilon}$ is applied
to compute the consolidated state prediction~
\vspace{-0.2cm}
\begin{equation}
\label{eq:tightened_state}
\mathbf {\bar
x}_{\epsilon}:=\{x_{0},\bar x_{1,\epsilon_1}\ldots,\bar x_{N,\epsilon_N}\},
\end{equation}
the feasibility of $\mathbf {\bar x}_{\epsilon}$ is no longer guaranteed.
Note, however, that $\bar\ubf_{\epsilon}\in\mcU:=\mcU_1\times\ldots\times
\mcU_{N}$, i.e., $\bar\ubf_{\epsilon}$ is feasible. Hence, no additional
tightening is needed on the input constraints.
 
In the following, Section~\ref{subsec:upbound} defines an upper bound on the
maximal feasibility violation of $\mathbf {\bar x}_{\epsilon}$. This
feasibility violation is a consequence of the local relaxations of the equality constraints.
Then, Section~\ref{subsec:improve_tightening} introduces sufficient
conditions to ensure the primal feasibility of the \itglob prediction
and provides guidelines for the choice of the tightening parameters for each IT
subproblem.
\subsection{Upper bound on the maximal feasibility violation of $\mathbf
{\bar x}_{\epsilon}$}
\label{subsec:upbound}
Let $\bar\ubf_\epsilon$ and $\mathbf {\bar x}_\epsilon$ be defined
by~\eqref{eq:tightened_control} and~\eqref{eq:tightened_state}, respectively.
Moreover, from~\eqref{eq:relaxed_consensus_1}
and~\eqref{eq:relaxed_consensus_2}, the following holds:
\begin{equation}
\label{eq:local_mismatches}
|\bar x_{t,\epsilon_{t-1}}^{(t-1)}-\bar x_{t,\epsilon_t}^{(t)}|\le 2
\epsilon_{z_t}.
\end{equation}
Our goal is to characterize how far the \itglob predicted state is
from the \itloc predicted state.
\begin{lem}
\label{lem:upper_bound_global_prediction}
Let the $t$-step-ahead \itglob prediction $\bar x_{t,\epsilon_t}$ be defined
by~\eqref{eq:tightened_state} and assume that~\eqref{eq:local_mismatches}
holds. Then, there exists $\alpha_t\in\mathbb R, \alpha_t\geq 0$, such that the
mismatch between $\bar x_{t,\epsilon_t}$ and the state of the $t$-th subproblem
$\bar x_{t,\epsilon_t}^{(t)}$ is bounded, as follows:
\begin{equation}
\label{eq:up_bound_alpha}  
|\bar x_{t,\epsilon_t}-\bar x_{t,\epsilon_t}^{(t)}| \le \alpha_t. 
\end{equation}     
\end{lem}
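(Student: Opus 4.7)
The plan is to prove the bound by induction on $t$, exploiting the fact that within each subproblem the dynamics~\eqref{eq:dyn} hold exactly, so that deviations between the consolidated and local predictions propagate only through the matrix $A$ plus a one-step injection controlled by the consensus relaxation~\eqref{eq:local_mismatches}. I will work componentwise, using the element-wise absolute value $|\cdot|$ on vectors and matrices, and the elementary inequality $|Av|\le|A|\,|v|$ that follows from the triangle inequality.

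For the base case, subproblem $0$ enforces $x_0^{(0)}=x_{\text{init}}=x_0$, so $|\bar x_{0}-\bar x_{0,\epsilon_0}^{(0)}|=0$ and we may take $\alpha_0=0$. For the inductive step, assume $|\bar x_{t-1,\epsilon_{t-1}}-\bar x_{t-1,\epsilon_{t-1}}^{(t-1)}|\le\alpha_{t-1}$. The consolidated prediction obeys $\bar x_{t,\epsilon_t}=A\bar x_{t-1,\epsilon_{t-1}}+B\bar u_{t-1,\epsilon_{t-1}}^{(t-1)}$ by its very definition~\eqref{eq:tightened_state}, while the dynamic constraint internal to worker $\Pi_{t-1}$ gives $\bar x_{t,\epsilon_{t-1}}^{(t-1)}=A\bar x_{t-1,\epsilon_{t-1}}^{(t-1)}+B\bar u_{t-1,\epsilon_{t-1}}^{(t-1)}$. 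Subtracting, the common control term cancels and I obtain $\bar x_{t,\epsilon_t}-\bar x_{t,\epsilon_{t-1}}^{(t-1)}=A\bigl(\bar x_{t-1,\epsilon_{t-1}}-\bar x_{t-1,\epsilon_{t-1}}^{(t-1)}\bigr)$.

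Adding and subtracting $\bar x_{t,\epsilon_{t-1}}^{(t-1)}$ in $\bar x_{t,\epsilon_t}-\bar x_{t,\epsilon_t}^{(t)}$, using this identity together with the consensus bound~\eqref{eq:local_mismatches}, and applying $|Av|\le|A|\,|v|$ componentwise yields the recursion
\begin{equation*}
|\bar x_{t,\epsilon_t}-\bar x_{t,\epsilon_t}^{(t)}|\le |A|\,\alpha_{t-1}+2\epsilon_{z_t}\mathbf{1}_n =: \alpha_t.
\end{equation*}
Unrolling the recursion with $\alpha_0=0$ gives the closed form $\alpha_t = 2\sum_{i=1}^{t}|A|^{\,t-i}\epsilon_{z_i}\mathbf{1}_n$, which is nonnegative componentwise since $|A|^{t-i}\ge 0$ entry-wise. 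This establishes the claim and, as a bonus, exhibits $\alpha_t$ explicitly in terms of the relaxation parameters $\epsilon_{z_1},\dots,\epsilon_{z_t}$, which will be useful in Section~\ref{subsec:improve_tightening} for designing the tightening $\gamma_t$ that absorbs $\alpha_t$.

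The only mild subtlety is to avoid collapsing to a scalar norm too early: the lemma is stated componentwise, and the subsequent constraint-tightening argument needs the componentwise bound to be compared directly against each row of $G_t$. Keeping $|A|$ as the entry-wise absolute-value matrix (consistent with the paper's notation $|A|^{i}$) rather than an induced matrix norm is therefore essential; everything else is a routine telescoping induction.
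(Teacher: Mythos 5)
Your proof is correct and follows essentially the same route as the paper's: a constructive/telescoping induction in which the local one-step dynamics make the control term cancel, the consensus relaxation~\eqref{eq:local_mismatches} injects the $2\epsilon_{z_t}$ term, and the triangle inequality propagates the previous error through $A$. The only (harmless) difference is that you keep the bound componentwise with the entrywise matrix $|A|$, obtaining a vector recursion, whereas the paper collapses to the scalar $\alpha_t=2\sum_{j=0}^{t-1}\|A^j\|\epsilon_{z_{t-j}}$ of Remark~\ref{rem:after_lemma_2_a}; taking the largest component of your vector recovers the scalar $\alpha_t\in\mathbb{R}$ required by the lemma statement.
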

\begin{proofs}
\ifdraft
See Appendix~\ref{app:lemma2}.
\else
See Appendix~II in~\cite{TechReport}.
\fi  
\end{proofs}
\begin{rem}
\label{rem:after_lemma_2_a}
According to Lemma~\ref{lem:upper_bound_global_prediction} a possible choice of
$\alpha_t$ is the following:
\vspace{-0.3cm}
\begin{equation}
\label{eq:alpha_t}
\alpha_t:=2\sum_{j=0}^{t-1}\|A^j\|\epsilon_{z_{t-j}}.
\end{equation}
\end{rem}
\subsection{Tightening parameter selection}
\label{subsec:improve_tightening}
According to Lemma~\ref{lem:upper_bound_global_prediction},
$\bar x_{t,\epsilon_t}$ differs from $\bar x_{t,\epsilon_t}^{(t)}$ by a quantity
bounded by $\alpha_t$. Thus, $\bar x_{t,\epsilon_t}$ might violate the
constraints of the $t$-th subproblem~\eqref{eq:subproblem} by as much as $\alpha_t$, in the worst-case scenario. In particular, we must ensure that
$C_t\bar x_{t,\epsilon_t}+D_t \bar u_{t,\epsilon_t} +g_t\le 0$. Using the
computed upper bound~\eqref{eq:up_bound_alpha}, the following holds: 
\begin{align*}
\begin{array}{c}
C_t \bar x_{t,\epsilon_t}^{(t)}+D_t\bar u_{t,\epsilon_t}^{(t)} +g_t +|C_t|
\alpha_t\1_n +\epsilon_t\1_{p_t}\le 0 \\
~~~~~~\Updownarrow{\text{\eqref{eq:up_bound_alpha}}}\\
C_t \bar x_{t,\epsilon_t}+D_t \bar u_{t,\epsilon_t}^{(t)} +g_t +|C_t|
\alpha_t\1_n +\epsilon_t\1_{p_t}\le 0,
\end{array} 
\end{align*} 
where $|C_t|$ indicates the absolute value of $C_t$. 
Recall that these mismatches are caused by the use of inexact
solvers and that $\alpha_t$ depends on $\epsilon_{z_t}$. In the
following, we provide guidelines to improve the choice of $\epsilon_t$ for each
subproblem. Furthermore, we provide a modified upper bound for the optimal
Lagrange multiplier associated with the tightened subproblems~\eqref{eq:tightened_subproblem}, which considers the additional tightening introduced by $\alpha_t$.
\begin{lem}
\label{lem:improve_epsilon_t}
Consider the following IT subproblems:
\begin{equation}
\label{eq:tightened_subproblem_gamma}
\bfV^*_{\gamma_t}\!=\!\min_{\xi_{t}}~
\bfV_{t}(\xi_{t})~\text{s.t.:}~ G_{\xi_{t}}
\xi_{t}+g_{\xi_t}+\mathbf{\gamma}_{t} \leq 0,
\end{equation}
for $\timeint{0}{N}$, where
$\gamma_t$$:=[$$(|C_t|\alpha_t\1_n+\epsilon_t\1_{p_t})^T$~$\epsilon_t\1^T_{4n}]^T$.
Consider the assumptions of Lemma~\ref{lem:lemma_up_bound} and the existence of
$\alpha_t$ for all $\timeint{1}{N}$ according to
Lemma~\ref{lem:upper_bound_global_prediction}.
Then, for each subproblem, there exist $\epsilon_t\geq 0$,
$\epsilon_{z_t}, \epsilon_{z_{t+1}}>\epsilon_t$ such
that the upper bound for the optimal Lagrange multiplier associated with the IT
subproblems~\eqref{eq:tightened_subproblem_gamma} is described by
\begin{equation*}
\|\mu_{t,\gamma_t}^*\|\leq 2\mathcal{R}_t := 2{\frac{\mathbf V_t(\tilde
\xi_t)-d_t(\tilde
\mu_t)}{\min_{j=1,\ldots,p_t+2n}\{[\Gamma_{\alpha_t}]_j\}}},~\timeint{0}{N},
\end{equation*}
$
\Gamma_{\alpha_t}:=\big [
[-(G_{t}\tilde
y_t+g_{t})^T-(|C_t|\alpha_t\1_n)^T-\epsilon_t\mathbf{1}_{p_t}^T]
[2\rho (\epsilon_{z_{t}}-\epsilon_t)\mathbf{1}_n^T]
[2\rho(\epsilon_{z_{t+1}}-\epsilon_t)\mathbf{1}_n^T]\big]^T\in
\mathbb R^{p_t+2n}$.
\end{lem}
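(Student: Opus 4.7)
The plan is to mirror the Slater-vector / Nedic-style argument used to prove Lemma~\ref{lem:lemma_up_bound}, now carefully accounting for the extra tightening term $|C_t|\alpha_t\mathbf 1_n$ that the $\alpha_t$-bound from Lemma~\ref{lem:upper_bound_global_prediction} forces onto the first block of inequalities. The structure is: (i) verify a strict Slater condition for the refined IT problem~\eqref{eq:tightened_subproblem_gamma}; (ii) evaluate weak duality at the Slater point to obtain the margin vector $\Gamma_{\alpha_t}$; (iii) combine with an arbitrary feasible $\tilde\mu_t$ for the dual of the original subproblem~\eqref{eq:dual_probl} to get the quotient upper bound $2\mathcal R_t$.

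First I would fix a Slater vector $\tilde y_t$ for the original subproblem, i.e.\ $G_t\tilde y_t+g_t<0$ (exactly as assumed in Lemma~\ref{lem:lemma_up_bound}), and extend it to a Slater candidate $\tilde \xi_t$ for~\eqref{eq:tightened_subproblem_gamma} by picking $\tilde z_t=H_1\tilde y_t$ and $\tilde z_{t+1}=H_2\tilde y_t$, so that the relaxed consensus blocks evaluate to $-\epsilon_{z_t}\mathbf 1_n$ and $-\epsilon_{z_{t+1}}\mathbf 1_n$. Then I would impose the two constraints that make $\tilde\xi_t$ strictly feasible for~\eqref{eq:tightened_subproblem_gamma}: (a) on the first block, $(G_t\tilde y_t+g_t)+|C_t|\alpha_t\mathbf 1_n+\epsilon_t\mathbf 1_{p_t}<0$, which is possible exactly when $\epsilon_t<\min_j\{-(G_t\tilde y_t+g_t)_j\}-\max_j\{(|C_t|\alpha_t\mathbf 1_n)_j\}$ (and in particular when the hypotheses of Lemma~\ref{lem:lemma_up_bound} hold and $\alpha_t$ is small enough); (b) on the consensus blocks, $-\epsilon_{z_t}+\epsilon_t<0$ and $-\epsilon_{z_{t+1}}+\epsilon_t<0$, i.e.\ $\epsilon_{z_t},\epsilon_{z_{t+1}}>\epsilon_t$, which is a direct hypothesis of the lemma.

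Next I would invoke the standard Lagrangian inequality. Since $\mu^*_{t,\gamma_t}\geq 0$ is dual-optimal for~\eqref{eq:tightened_subproblem_gamma} and $\tilde\xi_t$ is primal feasible, weak duality (with the scaling $\mathrm{diag}\{I_{p_t},\rho I_{4n}\}$ that appears in $\mathcal L_{\mu_t}$) gives
\begin{equation*}
d_t(\tilde\mu_t)\leq d^*_{t,\gamma_t}\leq \mathbf V_t(\tilde\xi_t)+\langle\mu^*_{t,\gamma_t},\mathrm{diag}\{I_{p_t},\rho I_{4n}\}(G_{\xi_t}\tilde\xi_t+g_{\xi_t}+\gamma_t)\rangle,
\end{equation*}
for any dual-feasible $\tilde\mu_t$. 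The inner product of the non-negative multipliers with the strictly-negative slack vector is bounded above by $-\|\mu^*_{t,\gamma_t}\|_1\min_j[\Gamma_{\alpha_t}]_j$ after collapsing the mirrored $w^\pm,v^\pm$ blocks (as in Lemma~\ref{lem:lemma_up_bound}). Rearranging and using $\|\cdot\|\leq\|\cdot\|_1$ yields
\begin{equation*}
\|\mu^*_{t,\gamma_t}\|\leq \frac{\mathbf V_t(\tilde\xi_t)-d_t(\tilde\mu_t)}{\min_{j=1,\ldots,p_t+2n}[\Gamma_{\alpha_t}]_j},
\end{equation*}
and multiplying by the factor of two (as in Lemma~\ref{lem:lemma_up_bound}, to get a uniform bound with respect to the choice of $\tilde\mu_t$) gives the claimed $2\mathcal R_t$.

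The main obstacle will be the first step: showing that a Slater point for the \emph{tightened} problem actually exists when both $\epsilon_t$ and the state-dependent term $|C_t|\alpha_t$ act simultaneously on the same block. This forces $\epsilon_t$ to be smaller than in Lemma~\ref{lem:lemma_up_bound} (it must absorb the worst-case component of $|C_t|\alpha_t$), and since $\alpha_t$ itself depends on $\epsilon_{z_\tau}$ for $\tau\leq t$ through~\eqref{eq:alpha_t}, one must check that the inequalities $\epsilon_t<\min_j\{-(G_t\tilde y_t+g_t)_j\}-\max_j\{(|C_t|\alpha_t)_j\}$ and $\epsilon_{z_t},\epsilon_{z_{t+1}}>\epsilon_t$ are jointly satisfiable; this is the content of the ``there exist $\epsilon_t\geq 0$, $\epsilon_{z_t},\epsilon_{z_{t+1}}>\epsilon_t$'' clause, and once it is guaranteed the rest of the proof is a direct reprise of the argument behind Lemma~\ref{lem:lemma_up_bound}.
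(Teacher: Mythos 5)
Your proposal is correct and takes essentially the same route as the paper: the paper's own proof simply re-applies the Slater-vector argument of Lemma~\ref{lem:lemma_up_bound} to the $\gamma_t$-tightened subproblems~\eqref{eq:tightened_subproblem_gamma}, which turns the margin vector into $\Gamma_{\alpha_t}$ and requires that $\epsilon_t+\alpha_t\max_{j}\sum_{i}|[C_t]_{j,i}|$ stay within the Slater margin while $\epsilon_{z_t},\epsilon_{z_{t+1}}>\epsilon_t$, exactly the feasibility and positivity conditions you set up before invoking weak duality at the Slater point. The only cosmetic slip is your attribution of the factor $2$ in $2\mathcal{R}_t$: it comes from collapsing the mirrored multipliers $w_t^{\pm},v_{t+1}^{\pm}$ into their elementwise maxima (as in the proof of Lemma~\ref{lem:lemma_up_bound}), not from uniformity over the choice of $\tilde\mu_t$, but this does not affect the validity of the argument.
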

\begin{proofs}
\ifdraft
See Appendix~\ref{app:lemma3}.
\else
See Appendix~III in~\cite{TechReport}.
\fi
\end{proofs}
\begin{rem}
\label{rem:after_lemma_3}
The choice of $\epsilon_t$ ($\timeint{0}{N}$) is not unique and depends on the
choice of $\epsilon_{z_t}$ ($\timeint{1}{N}$).
For example, given $\alpha_t$ in~\eqref{eq:alpha_t}, a
possible choice of  $\epsilon_{z_t}$ ($\timeint{1}{N}$) is:
\begin{align}
\epsilon_{z_t}\!\leq\!\!
\min&\bigg\{\!\frac{\epsilon_{z_N}}{\|A^{N-t}\|},\!..,\!\frac{\epsilon_{z_{t+1}}}{\|A\|},\!\frac{\upbound{t}}{1+2t\!\!\!\maxC{t}}\!\bigg\}.
\label{eq:choice_epsilon_z}
\end{align}
Consequently, the tightening parameters are given by:
\begin{align}
\epsilon_t \!\leq\!
\frac{1}{2}\min\Bigg\{\!\epsilon_{z_t},\epsilon_{z_{t+1}},\!\!{\upbound{t}}\!\Bigg\},
\label{eq:choice_epsilon_t}
\end{align}
for $\timeint{0}{N}$. This choice implies that first we select the relaxation
parameters and then we \textit{adapt} the tightening parameters on the
original inequality constraints based on the choice of $\epsilon_{z_t}$ for all
$\timeint{1}{N}$. An alternative is to fix $\epsilon_t$ for the inequality
constraints and consequently compute $\epsilon_{z_t}$. In general, the choice
of the parameters strongly depends on the system-state matrix $A$
in~\eqref{eq:LTI_system}.
\end{rem}
\begin{rem}
\label{rem:offline_computation_parameters}
In the context of this work, Algorithm~\ref{alg:Alg2}, described in the next
section, adapts the above derived parameters at each problem instance. If we
consider a fixed tightening scheme, such as the one proposed by~\cite{RugPat:13}, $\epsilon_{t}$
and $\epsilon_{z_t}$ can be computed offline (for all the
initial states in the region of attraction).
\end{rem}
In the following, we show that by using $\{\mathbf{\bar
x}_\gamma, \bar\ubf_{\gamma}\}$---$\bar\ubf_\gamma$ is the control sequence
obtained by solving the IT subproblems~\eqref{eq:tightened_subproblem_gamma} and
$\mathbf{\bar x}_\gamma$ is the corresponding consolidated prediction---the
inequality constraints of the original MPC problem~\eqref{eq:initial_problem}
are satisfied. Consequently, the predicted final state is in the terminal set of
the original problem.
If the desired level of suboptimality of Algorithm~\ref{alg:Alg1} is chosen as:
\begin{equation}
\label{eq:accuracy_t}
\eta_t :={\epsilon_t}/{2},
\end{equation}
then, according to Theorem~\ref{thm:thm1}, there exists $
\bar\xi_{t,{\gamma_t}}:=[\bar y_{t,{\gamma_t}}^{T}~ \bar
z_{{t},{\gamma_t}}^T~ \bar z_{{t+1},{\gamma_t}}^T]^T$ such that
$\|[\nabla^T_{\mu_t}d_{\gamma_t}(\bar \xi_{t,{\gamma_t}})]_+\|
\leq\eta_t<\epsilon_t$.
Using similar arguments as in~\cite{NecoaraOCAM}, the following holds for $\timeint{0}{N}$:
\begin{align*}
\Bigg[G_{\xi_t}\bar \xi_{t,{\gamma_t}}+g_{\xi_t}+\begin{bmatrix}
|C_t|\alpha_t\1_n+\epsilon_t\1_{p_t}\\
\epsilon_t\1_{4n}
\end{bmatrix}\Bigg]_+<\epsilon_t\1_{p_t+4n}.
\end{align*}
Hence, for all $j=1,\ldots,p_t$, the following holds
\begin{align*}
\bigg[[C_t \bar x_{t,{\gamma_t}}^{(t)}+D_t
\bar
u_{t,{\gamma_t}}^{(t)}+g_t+|C_t|\alpha_t\1_n+\epsilon_t\1_{p_t}]_j\bigg]_+\leq \epsilon_t.
\end{align*}
Consequently, exploiting the upper bound~\eqref{eq:up_bound_alpha},
for all $j=1,\ldots,p_t$, we have:
\begin{align*}
&[C_t \bar x_{t,{\gamma_t}}+D_t
\bar u_{t,{\gamma_t}}+g_t+|C_t|\alpha_t\1_n+\epsilon_t\1_{p_t}]_j\leq \epsilon_t
\end{align*}
which leads to $C_t \bar x_{t,{\gamma_t}}+D_t
\bar u_{t,{\gamma_t}}+g_t<0\quad\forall \timeint{0}{N}$, where
$\bar x_{t,{\gamma_t}}$ is the $t$-step-ahead \itglob prediction computed using
the solution to the IT subproblem~\eqref{eq:tightened_subproblem_gamma} with
tighening parameter $\gamma_t$.

In summary, this section showed that there exists a choice of the relaxation 
and tightening parameters that guarantee a feasible consolidated prediction with respect to the
original problem~\eqref{eq:initial_problem}.
\section{Suboptimality, recursive feasibility, and closed-loop
stability guarantees}
\label{sec:closed_loop_certification}

In the following, we derive bounds for $\bfV_{\gamma}:=\sum_{t=0}^N \bfV_t(\mathbf{\bar x}_{\gamma},\bar
{\mathbf u}_{\gamma})$, i.e., the cost obtained using $\{\mathbf{\bar
x}_\gamma, \bar\ubf_{\gamma}\}$, with respect to the optimal cost $\mathcal{V}^*$ of the original problem. 
\begin{thm}
\label{th:theorem2}
Assuming that there exist $\epsilon_t$ ($\timeint{0}{N}$) and $\epsilon_{z_t}$
($\timeint{1}{N}$) selected according to Lemma~\ref{lem:improve_epsilon_t}, then the following holds:
\vspace{-0.5cm}
\begin{equation}
\label{eq:up_low_cost_fun}
\mathcal V^*\le\bfV_{\gamma}\le\mathcal
 V^*+2\sum_{t=0}^N\mathcal R_t\sqrt{p_t}\bar
\gamma_t,
\end{equation}
where $\bar \gamma_t:=\epsilon_t+\maxC{t}\alpha_t$.
\end{thm}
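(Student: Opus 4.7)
My first observation is that the lower bound $\mathcal V^* \le \bfV_\gamma$ is immediate. Section~\ref{sec:tightening_improvements} already established that the consolidated prediction $\{\mathbf{\bar x}_\gamma,\bar{\mathbf u}_\gamma\}$ strictly satisfies every inequality of~\eqref{eq:initial_problem}; by construction it also satisfies the dynamics and, at $t=N$, the terminal set $\mathcal X_N$. Hence it is feasible for~\eqref{eq:initial_problem} and cannot attain a cost below $\mathcal V^*$. Note also that when $\bfV_t$ is evaluated on a consolidated trajectory, the ADMM quadratic penalty vanishes (setting $z_t=\bar x_{t,\gamma}$ gives $H_1y_t=z_t$ and $H_2y_t = A\bar x_{t,\gamma}+B\bar u_{t,\gamma}=\bar x_{t+1,\gamma}=z_{t+1}$), so that $\bfV_\gamma$ indeed coincides with the original MPC cost along that trajectory.

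For the upper bound I would work subproblem by subproblem and then aggregate. With the suboptimality level fixed to $\eta_t=\epsilon_t/2$ by~\eqref{eq:accuracy_t}, Theorem~\ref{thm:thm1} applied to each IT subproblem~\eqref{eq:tightened_subproblem_gamma}, combined with the improved multiplier estimate $\|\mu^*_{t,\gamma_t}\|\le 2\mathcal R_t$ from Lemma~\ref{lem:improve_epsilon_t}, yields
$$0 \le \bfV^*_{\gamma_t}-\bfV_t(\bar\xi_{t,\gamma_t}) \le 2\mathcal R_t\,\eta_t .$$
I would then view $\gamma_t$ as a constraint perturbation of the ER subproblem~\eqref{eq:relaxed_subproblem} and invoke standard convex sensitivity analysis to obtain $\bfV^*_{\gamma_t}-\bfV^*_t \le \|\mu^*_{t,\gamma_t}\|\,\|\gamma_t\|\le 2\mathcal R_t\|\gamma_t\|$. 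The first $p_t$ components of $\gamma_t$ (namely $|C_t|\alpha_t\1_n+\epsilon_t\1_{p_t}$) correspond to the original stage inequalities and are each bounded by $\bar\gamma_t=\epsilon_t+\maxC{t}\alpha_t$; the remaining $4n$ entries only tighten the relaxed consensus constraints, which do not appear in~\eqref{eq:initial_problem} and therefore can be separated out when matching back to $\mathcal V^*$. This collapses the combined per-subproblem bound to $2\mathcal R_t\sqrt{p_t}\,\bar\gamma_t$ after absorbing the $\mathcal R_t\epsilon_t$ term from the algorithmic suboptimality into $\sqrt{p_t}\,\bar\gamma_t$ via $\epsilon_t\le\bar\gamma_t\le\sqrt{p_t}\,\bar\gamma_t$.

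The final step is to sum these estimates for $t=0,\dots,N$ and to translate the bound on $\bfV_t(\bar\xi_{t,\gamma_t})$ into a bound on $\bfV_t$ evaluated along the \itglob trajectory. The main obstacle I anticipate is precisely this last translation: the IT minimizer $\bar\xi_{t,\gamma_t}$ carries the \itloc state $\bar x^{(t)}_{t,\gamma_t}$, whereas $\bfV_\gamma$ is evaluated at the \itglob state $\bar x_{t,\gamma_t}$. I would control the resulting gap using $|\bar x_{t,\gamma_t}-\bar x^{(t)}_{t,\gamma_t}|\le \alpha_t$ from Lemma~\ref{lem:upper_bound_global_prediction} together with convexity and Lipschitz continuity of the stage cost on the (compact) feasible polyhedron; the resulting correction scales with $\maxC{t}\alpha_t$ and is therefore already contained in $\bar\gamma_t$. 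Combining the three contributions delivers the claimed inequality $\bfV_\gamma \le \mathcal V^*+2\sum_{t=0}^N\mathcal R_t\sqrt{p_t}\,\bar\gamma_t$, which together with the feasibility-based lower bound completes the proof.
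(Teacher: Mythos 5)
Your lower bound (feasibility of the \itglob pair for~\eqref{eq:initial_problem}, vanishing ADMM penalty along a consensus-exact trajectory) and your general plan for the upper bound --- Lagrangian sensitivity with the multiplier estimate $\|\mu^*_{t,\gamma_t}\|\le 2\mathcal R_t$ from Lemma~\ref{lem:improve_epsilon_t} and Cauchy--Schwarz giving $\sqrt{p_t}\,\bar\gamma_t$ --- are in the same spirit as the paper's proof. However, two steps of your upper bound are asserted rather than proved, and as written they do not deliver the constant in~\eqref{eq:up_low_cost_fun}. First, in your sensitivity inequality $\bfV^*_{\gamma_t}\le \bfV^*_t+\langle\mu^*_{t,\gamma_t},\mathrm{diag}\{I_{p_t},\rho I_{4n}\}\gamma_t\rangle$ the last $4n$ components of $\gamma_t$ contribute a \emph{nonnegative} term (both $\mu^*_{t,\gamma_t}\ge 0$ and $\gamma_t\ge 0$), sitting with a plus sign on the upper-bound side; you cannot ``separate them out'' merely because the consensus constraints do not appear in~\eqref{eq:initial_problem}, since discarding nonnegative terms shrinks the right-hand side. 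The way only $\langle\lambda^*_{\gamma_t},[I_{p_t}~0]\gamma_t\rangle$ survives is to bound the tightened optimal value by evaluating the Lagrangian at a consensus-exact candidate (e.g.\ the original optimizer with $z_t=x^*_t$, $z_{t+1}=x^*_{t+1}$): there the penalty vanishes, $\lambda^{*T}_{\gamma_t}(C_tx^*_t+D_tu^*_t+g_t)\le 0$, and the relaxed-consensus terms carry the coefficient $\rho(\epsilon_t-\epsilon_{z_t})\le 0$, so they may be dropped --- this is the argument your proposal is missing.

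Second, your local-to-consolidated translation does not close. The correction you would get from Lipschitz continuity of the quadratic stage cost scales like $L_t\alpha_t$ with $L_t$ a cost Lipschitz constant; this constant is unrelated to $\maxC{t}$, and the correction is not multiplied by $2\mathcal R_t\sqrt{p_t}$, so it is not ``already contained in $\bar\gamma_t$.'' Your route therefore yields at best $\bfV_\gamma\le\mathcal V^*+2\sum_{t=0}^N\mathcal R_t\sqrt{p_t}\,\bar\gamma_t+\sum_{t=0}^N L_t\alpha_t$ (plus the extra consensus terms above), which is strictly weaker than the theorem. The paper never passes through the \itloc cost: since the \itglob prediction satisfies the dynamics~\eqref{eq:dyn} by construction, $\bfV_\gamma$ is bounded directly through the stage-wise Lagrangians $\max_{\lambda\ge 0}\min_{x_t,u_t}\big(\mathcal V_t(x_t,u_t)+\langle\lambda,C_tx_t+D_tu_t+g_t\rangle\big)+\langle\lambda^*_{\gamma_t},[I_{p_t}~0]\gamma_t\rangle$, so the mismatch $\alpha_t$ enters only through the constraint term $|C_t|\alpha_t\1_n$ inside $\gamma_t$, weighted by the bounded multiplier --- which is precisely where the $\maxC{t}\alpha_t$ part of $\bar\gamma_t$ comes from. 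Without an argument of this kind, your proof does not establish~\eqref{eq:up_low_cost_fun}.
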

\begin{proofs}
\ifdraft
See Appendix~\ref{app:proof_theorem_2}.
\else
See Appendix IV in~\cite{TechReport}.
\fi
\end{proofs}
Theorem~\ref{th:theorem2} established the level of suboptimality of the consolidated prediction with respect to the original problem.
In particular, the sequence $\{\bar {\mathbf x}_{\gamma},\bar
{\mathbf u}_{\gamma}\}$ is suboptimal for the original problem and satisfies
the original inequality constraints (including those associated with $\mathcal
X_N$).
    
Recall that for the update of $\mathcal R_t$, our algorithm requires a strictly
feasible vector $\tilde y_t$ for~\eqref{eq:set_consensus}. Hence, every time new
measurements are available from the plant, our algorithm must provide a strictly
feasible solution (not necessarily optimal) for the first $p_t$ inequality
constraints of each ER subproblem. The following
lemma provides guidelines to compute $\tilde y_t$.
\begin{lem}
\label{lem:update_y}
Let $\bar y_{\gamma}$ be defined as  $\bar y_{\gamma}:=[\bar
y_{0,\gamma_0}^T\ldots\bar y_{N,\gamma_N}^T]=[(x_{0}^T~\bar
u_{0,\gamma_0}^T)\ldots(\bar x_{N-1,\gamma_{N-1}}^T~\bar u_{N-1,\gamma_{N-1}}^T)~(\bar
x_{N,\gamma_N}^T))]^T$. Then, a feasible $\tilde y^+$ at the next problem
instance, is given by:
\begin{align}
\label{eq:strictly_feasible_y}
\tilde y^+ & = [\bar y_{{\gamma}_{[2:N+1]}}((A+BK_f)\bar x_{N,\gamma_N})^T]^T 
\end{align}  
\end{lem}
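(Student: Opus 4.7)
\textbf{Proof plan for Lemma~\ref{lem:update_y}.}
My plan is to check, component by component, that the candidate shifted-and-extended vector $\tilde y^+$ satisfies $G_t \tilde y_t^+ + g_t < 0$ strictly for each $\timeint{0}{N}$ at the next problem instance, as required for Lemma~\ref{lem:lemma_up_bound} to apply. The candidate splits naturally into two parts: the shifted block $\bar y_{\gamma_{[2:N+1]}}$, which reuses the last $N$ pairs of the current solution, and the new terminal pair $((A+BK_f)\bar x_{N,\gamma_N},\,K_f(A+BK_f)\bar x_{N,\gamma_N})$ (or more precisely, the state/input pair induced by applying $K_f$ at $\bar x_{N,\gamma_N}$, yielding the terminal state $(A+BK_f)\bar x_{N,\gamma_N}$). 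Before anything else, I would observe that under nominal dynamics the measured state at the next instance equals $A x_0 + B \bar u_{0,\gamma_0} = \bar x_{1,\gamma_1}$, which is exactly the first state entry of $\tilde y^+$, so the new initial-state constraint $H_1 \tilde y_0^+ = x_{\textrm{init}}^+$ is matched.

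For the shifted block ($\timeint{0}{N-2}$ in the new instance), the entries are pairs $(\bar x_{t+1,\gamma_{t+1}}, \bar u_{t+1,\gamma_{t+1}})$ from the current \itglob solution. I would invoke the strict feasibility result proved at the end of Section~\ref{sec:tightening_improvements}, namely $C_t \bar x_{t,\gamma_t} + D_t \bar u_{t,\gamma_t} + g_t < 0$ for all $\timeint{1}{N}$, which is precisely what is required for strict feasibility of the original polyhedral inequality constraints at these subproblems. The tightening by $\gamma_t$ in~\eqref{eq:tightened_subproblem_gamma} guarantees the strict gap.

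For the two new blocks at positions $N-1$ and $N$ of the new instance, I would use Assumption~\ref{ass:terminalSet}. Since the current solution satisfies $\bar x_{N,\gamma_N}\in\mathcal{X}_N$ (enforced by the terminal constraint of the IT subproblem), the assumption immediately gives $\bar x_{N,\gamma_N}\in\mathcal{X}$, $K_f \bar x_{N,\gamma_N}\in\mathcal{U}$, and $(A+BK_f)\bar x_{N,\gamma_N}\in\mu\mathcal{X}_N$ with $0\le\mu<1$. The first two facts yield strict satisfaction of the stage inequality $C\bar x_{N,\gamma_N}+DK_f\bar x_{N,\gamma_N}+g<0$ at position $N-1$ (strictness because $\mathcal X,\mathcal U$ are proper convex sets containing the origin in their interior, so the interiors intersect along the $K_f$-graph), while the third fact forces the new terminal state into the \emph{interior} of $\mathcal{X}_N$, which delivers $F_N((A+BK_f)\bar x_{N,\gamma_N}) - f_N < 0$ thanks to $\mu<1$.

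The main obstacle I anticipate is precisely this final strict-versus-weak issue: the feasibility statements coming out of Algorithm~\ref{alg:Alg1} and Assumption~\ref{ass:terminalSet} are mostly non-strict, whereas Lemma~\ref{lem:lemma_up_bound} demands a Slater vector with a strict margin. The way I would resolve it is twofold: (i) for the shifted block, the strict margin is inherited from the tightening parameters $\epsilon_t$ and $|C_t|\alpha_t\1_n$ chosen in Lemma~\ref{lem:improve_epsilon_t}, which leave each inequality with residual $O(\epsilon_t)$; (ii) for the new terminal block, the contraction factor $\mu<1$ in Assumption~\ref{ass:terminalSet} leaves strictly positive slack in $\mathcal{X}_N$, and the interior-point property of $\mathcal{X},\mathcal{U}$ transfers into a strict gap in the stage constraint. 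Combining these two arguments, every row of $G_t \tilde y_t^+ + g_t$ is strictly negative for $\timeint{0}{N}$, so $\tilde y^+$ qualifies as the Slater vector required to update $\mathcal R_t$ in the next iteration.
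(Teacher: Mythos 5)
Your proposal is correct and follows essentially the same route as the paper: the paper's proof simply defers to the shift-and-append argument of Lemma~4.2 in~\cite{NecoaraOCAM}, using the strict feasibility of the consolidated prediction $\{\mathbf{\bar x}_\gamma,\bar\ubf_\gamma\}$ established in Section~\ref{sec:tightening_improvements} for the shifted entries and the invariance properties of $\mathcal X_N$ under the IH-LQR gain (Assumption~\ref{ass:terminalSet}, with $\mathcal X_N\subseteq\mathcal X$) for the appended terminal block, exactly as you do. The only soft spot is your informal justification of strictness for the appended stage constraint at position $N-1$ (the ``interiors intersect along the $K_f$-graph'' remark), but the paper's own one-line proof does not elaborate on this either.
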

\begin{proofs}
\ifdraft 
See Appendix~\ref{app:proof_lemma_update_y}.
\else
See Appendix V in~\cite{TechReport}.
\fi
\end{proofs}
We want to show that the cost decreases at each problem instance.
Using a similar argument as in~\cite{RawlingsBook}, under Assumption~\ref{ass:terminalSet} 
on $\mathcal X_N$ and ensuring that $\bar x_{N,\gamma_N}\in\mathcal X_N$ (thanks to a proper choice of the tightening
parameters, as the previous section showed), we can show that:
\begin{align}
\label{eq:upper_bound_feasible_solution_next_problem_instance}
\!\!\sum\limits_{t=0}^{N}\!\bfV_{t}(\tilde y_{t}^+)\! \leq\!
\sum\limits_{t=0}^{N}\! \bfV_{t}(\bar y_{t,\gamma_t})\!-\!\!\bfV_{0}(y_{0,\gamma_0})~\forall y_{0,\gamma_0}\!\in \!\mathcal Y_{\textrm{attr}},
\end{align}  
where $\mathcal Y_{\textrm{attr}}$ is the region of attraction. Hence,
from \eqref{eq:up_low_cost_fun}
and \eqref{eq:upper_bound_feasible_solution_next_problem_instance}, the following
holds:
\ifdraft
{\color{black}
\begin{subequations}
\label{eq:asymptotic_stability_inequality}
\begin{align}
&\sum\limits_{t=0}^N\bfV_{t}(\bar y_{t,\gamma_t}^+) \overset{\text{\eqref{eq:up_low_cost_fun}}}{\leq} \mathcal V^*(x^+)+  \sum\limits_{t=0}^{N}f(\bar\gamma_t^+,\mathcal R_t^+)\\
&\leq  \sum\limits_{t=0}^N\bfV_{t}(\tilde y_{t}^+)+  \sum\limits_{t=0}^{N}f(\bar\gamma_t^+,\mathcal R_t^+)\\
& \overset{\text{\eqref{eq:upper_bound_feasible_solution_next_problem_instance}}}{\leq}  
 \sum\limits_{t=0}^N\bfV_{t}(\bar y_{t,\gamma_t})-\bfV_{0}(y_{0,\gamma_0})+ \sum\limits_{t=0}^{N}f(\bar\gamma_t^+,\mathcal R_t^+)
\end{align}
\end{subequations}}
\else
\begin{align}
\label{eq:asymptotic_stability_inequality}
\!\!\!\!&\sum\limits_{t=0}^N\!\!\bfV_{t}(\bar y_{t,\gamma_t}^+){\leq} \!\!
 \sum\limits_{t=0}^N\!\!\big[\bfV_{t}(\bar
 y_{t,\gamma_t}\!)\!+\!f(\bar\gamma_t^+,\mathcal R_t^+)\big]\!\!-\!\!\bfV_{0}(y_{0,\gamma_0})
\end{align}
\fi
where $f(\epsilon_t^+,\mathcal
R_t^+,\alpha_t^+)\!\!:=\!\!(2\mathcal
R_t^+\sqrt{p_t})\bar \gamma_t^+$, using $\bar\gamma_t^+,\mathcal
R_t^+$ to represent the updated values of these parameters according to $\tilde
y^+_\gamma$. The inequality above shows that the total cost decreases at
each problem instance if $\mathcal X_N$ is defined according to
Assumption~\ref{ass:terminalSet} and if the $N$-step-ahead \itglob prediction
lies in the terminal set. Asymptotic stability of our controller follows if $\bfV_{0}(y_{0,\gamma_0})\geq \sum_{t=0}^{N}f(\bar\gamma_t^+,\mathcal
R_t^+)$.
Hence, we can modify the update of $\epsilon_t$ and $\epsilon_{z_t}$ to ensure
that~\eqref{eq:asymptotic_stability_inequality} is satisfied.
\begin{rem}
\label{rem:update_z}
A possible choice of $\epsilon_{z_t}$ $(\timeint{1}{N})$ to
fulfill~\eqref{eq:asymptotic_stability_inequality} is the following:
\begin{align}
&\epsilon_{z_t}^+ \leq
\min\bigg\{\bar \epsilon_{z_t},\epsilon_{z_t}~\text{in
\eqref{eq:choice_epsilon_z}}\bigg\},
\label{eq:choice_epsilon_z_update}
\end{align}
\begin{align*}
\bar \epsilon_{z_t}={\bfV_{0}}\bigg[{4N\mathcal R_t^+
\sqrt{p_t}\bigg(1+2t\maxC{t}\bigg)}\bigg]^{-1}.\notag
\end{align*}
Consequently, $\epsilon_t$ can be selected according to~\eqref{eq:choice_epsilon_t} to
preserve the definition of the upper bound on the optimal Lagrange multipliers
given in Lemma~\ref{lem:improve_epsilon_t}.
\end{rem}
Algorithm~\ref{alg:Alg2} summarizes the main steps needed to obtain a
stabilizing control law when the original MPC problem is solved in parallel
using inexact solvers. In particular, note that, if the measured
state is in $\mathcal X_N$, from Assumption~\ref{ass:terminalSet}, the state
and the control constraints are automatically satisfied without solving the MPC
problem in parallel.
\begin{algorithm}[t]
\fontsize{8}{8}\selectfont
\begin{algorithmic}[1]
\State{Given $A,B,\mathcal X,\mathcal U, \mathcal X_N, N$}
\State{Compute offline: $K_f, P_f, F_N, f_N$.}
\State{Measure: initial state $x_{\textrm{init}}$ at time $t=0$.}
\For{$t=0$ {\bf to} $N$}
\State{Compute offline: $G_{\xi_t}, g_{\xi_t}, \mathcal Q_t, \mathcal W_t, c_t$.}
\State{Compute: initial strictly feasible vector $\tilde y_t$.}
\State{Compute: initial tightening according to Lemma~\ref{lem:improve_epsilon_t}.}
\EndFor
\For{$t=0$ \textbf{to} $\infty$}
\State{Measure: initial state $x_{\textrm{init}}$.}
\If{$x_{\textrm{init}}\in\mathcal X_N$}
\State{Compute: $u=K_fx_{\textrm{init}}$.}
\Else
\State{Compute in parallel (Alg. 1):
$\bar\xi_{0,{\gamma_t}},\ldots,\bar\xi_{N,{\gamma_N}}$
exploiting~\eqref{eq:tightened_subproblem_gamma}.}
\State{Compute: $u=\bar u_{\gamma_0}$.}
\State{Update: $\tilde y\leftarrow \tilde y^+$ according to~\eqref{eq:strictly_feasible_y}.}
\For{$t=0$ \textbf{to} $N-1$}
\State{Update: $\epsilon_{z_{N\!-\!t}}\!\!\!\leftarrow\!
\epsilon_{z_{N\!-\!t}}^+$ according to Lemma~\ref{lem:improve_epsilon_t}.}
\EndFor
\For{$t=1$ \textbf{to} $N$}
\State{Update: $\epsilon_{t} \leftarrow \epsilon_{t}^+$
according to Lemma~\ref{lem:improve_epsilon_t}.}
\State{Update: $\gamma_{t} \leftarrow \gamma_{t}^+$
according to Lemma~\ref{lem:improve_epsilon_t}.}
\EndFor
\EndIf
\State{Implement $u$.}
\EndFor
 \caption{MPC with adaptive parallel tightening scheme.}
 \label{alg:Alg2}
 \end{algorithmic} 
\end{algorithm}
\begin{rem}
Steps 17-23 are the only nonparallel ones of the algorithm
(Algorithm~\ref{alg:Alg1} is instead fully parallelizable).
The main reason is in the adaptive nature of the algorithm (see also
Remark~\ref{rem:offline_computation_parameters}). Algorithm~\ref{alg:Alg2}
adapts $\epsilon_t$ and $\epsilon_{z_t}$ every time new measurements are available from the plant.
A fully parallel Algorithm~\ref{alg:Alg2} is possible using a fixed tightening strategy, in which $\epsilon_t$ and $\epsilon_{z_t}$ can be computed
offline.
\end{rem}
\section{Evaluation}
\label{sec:evaluation} 
We evaluated Algorithm~\ref{alg:Alg2} using the LTI system described
in~\cite{Example}. The system (sampled at $T_s = 0.5$ s) is
described by:
\begin{align*}
x(t+1) = A x(t) +B u(t),~h(t) = Cx(t)+D u(t),
\end{align*}
where $x(t)\!\in\! \mathcal X\!\!:=\!\!\big\{\!x(t)\!\in\! \mathbb R^2\big |
|x_i(t)|\!\leq\! 4 (i=1,2),\! \forall t\!\geq\! 0\big\}$, $u(t)\in \mathcal U
:=\big\{u(t)\in \mathbb R^2\big | |u_i(t)|\leq 1 ~(i=1,2), \forall t\geq 0\big\}$, $h(t)\in \mathcal
H:=\big\{h(t)\in \mathbb R^2\big| |h_i(t)|\leq 1~ (i=1,2), \forall t\geq
0\big\}$,
 and the quadruple $(A,B,C,D)$ is given by:
\begin{align*}
& A = \begin{bmatrix} 
1.09&0.22\\
0.49&0.02
\end{bmatrix},~B = \begin{bmatrix}
1.22&0.88\\
-0.78&-0.34
\end{bmatrix}\\
& C= \begin{bmatrix}
1.34&-0.16\\
-3.19&-0.56
\end{bmatrix},~D = \begin{bmatrix}
1.60&1.01\\
-0.68&0.77
\end{bmatrix}
\end{align*}
The weighting matrices $Q$,
$R$, and $P_N$ in the cost~\eqref{eq:cost_fun} and the IH-LQR gain
$K_f$ are selected according to~\cite{Example}. 
We implemented our design in MATLAB (to tune the controller and test the
initial design) and in C (to run a performance analysis).
In particular, in MATLAB, we used the Parallel Computing
Toolbox\texttrademark~to assign the computation of Algorithm~\ref{alg:Alg1} to 8 parallel workers, given a
prediction horizon $N=7$.
Furthermore, we relied on the MPT3 toolbox~\cite{MPT3} to compute
$\mathcal X_N$ and the optimal solution of
Problem~\eqref{eq:initial_problem}.
Finally, we compared our design to~\cite{NecoaraOCAM}.

We considered the following scenario. The initial state of the system is $x_0
= [-0.101 -3.7]^T$. The total number of complicated
constraints~\eqref{eq:set} for the original problem is 90.
We used~\eqref{eq:choice_epsilon_z}
and~\eqref{eq:choice_epsilon_t} to initialize
$\epsilon_{z_t}$ and $\epsilon_{t}$, respectively.
To update them, we relied on~\eqref{eq:choice_epsilon_z_update}
and~\eqref{eq:choice_epsilon_t}.
The selected $x_0$ caused $u$ and $h$ to saturate
(12 active constraints). In this scenario, the state enters $\mathcal X_N$ in 3 steps.

Figure~\ref{fig:upp_bound_alpha} shows the mismatch between the \itloc
prediction and the \itglob prediction for one problem instance. As
Figure~\ref{fig:upp_bound_alpha} depicts, the mismatch (for both states) is
below the predicted upper bound $\alpha_t$ for all the $N+1$ subproblems.
\begin{figure}[t]
   \centering
\includegraphics[scale=.6]{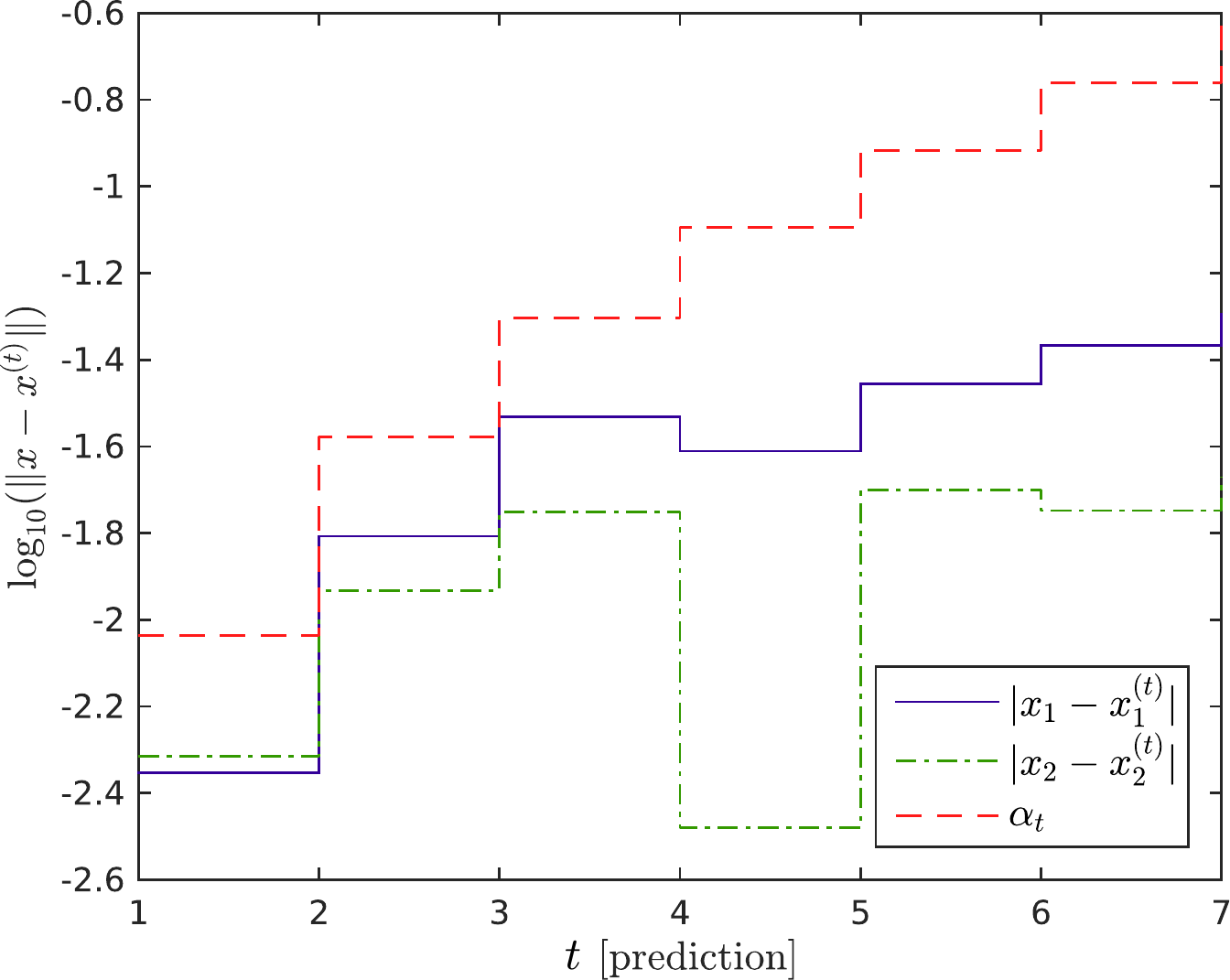}
\caption{Mismatch between \itloc and \itglob predictions.}
    \label{fig:upp_bound_alpha} 
    \vspace{-.5cm}  
 \end{figure} 
 
Table~\ref{tab:iterations} compares the proposed technique
to~\cite{NecoaraOCAM}. The table reports the upper bound $\bar k$ on the number
of iterations needed to achieve a suboptimal solution for
Problem~\eqref{eq:initial_problem} and the level of suboptimality
$\eta$. The table lists only the
  first four subproblems, which are the most significant due to the presence of
  active constraints in these subproblems.
  The method in~\cite{NecoaraOCAM} and our new proposed parallel algorithm
  produce a comparable behavior, thanks to an appropriate selection of the tightening parameters. 
The parallel algorithm, however, is able to achieve similar results to
those in~\cite{NecoaraOCAM} using a smaller
number of iterations.
The larger values of $\bar k$
for~\cite{NecoaraOCAM} are probably caused by the value of the Lipschitz
constant and by the problem conditioning, which affect the convergence requiring
a higher accuracy for the solver.
In particular, in our proposed framework, the DFG is applied to simpler problems
characterized, in the worst case scenario, by a Lipschitz constant $\max_{\timeint{0}{N}} \{L_{\mu_t}\} = 196$ and by a condition number $\max_{\timeint{0}{N}} 
\{\kappa_t\}= 104$. In~\cite{NecoaraOCAM}, the DFG
solves a larger problem characterized by a Lipschitz
constant $L_s = 21994$ and by a condition number $\kappa_s = 7020$. Hence, the
modularity of our approach has positive implications on important properties for
the convergence of the solver.

Table~\ref{tab:iterations} lists the time required by the optimizer to
return a suboptimal solution for Problem~\eqref{eq:initial_problem}.
To measure the performance, we implemented both algorithms in C
on a Linux-based OS. We noticed that given the small size of the problem, 
running Algorithm~\ref{alg:Alg1} in parallel did not result in significant
speedups compared to our algorithm running in serialized mode~\cite{CS}.
Nevertheless, in both cases, we registered a speedup (230x) compared
to~\cite{NecoaraOCAM}.
The modularity of the proposed algorithm is beneficial even for problems of
small size, such as the one considered in this section for the comparison
with~\cite{NecoaraOCAM}.
 We expect the benefits to be even more pronounced when considering problems of
 larger size.
 \captionsetup[table]{labelformat=simple, labelsep=newline,justification=centering, font=footnotesize}
\newcommand{\tablename}{TABLE}
\begin{table}[t]
\centering
  \caption{PERFORMANCE ANALYSIS OF ALGORITHM~\ref{alg:Alg1}
  AND~\cite{NecoaraOCAM}. RESULTS SHOW THE MEDIAN OF 11 EXPERIMENTS.}
\fontsize{7.5}{7.5}\selectfont
\renewcommand{\arraystretch}{1.4}
\centering
 \begin{tabular}[\columnwidth]{c|p{2.09cm}| p{1.72cm}  p{.05cm}  p{.05cm}
 p{.05cm} }
\hline
\multirow{1}{*}{ \textit{Sample}} & \multicolumn{5}{|c}{\it Iterations (for
subproblem)}\\
\multirow{1}{*}{ \textit{time}} & \multicolumn{1}{c}{$\bar k~{(\text{\cite{NecoaraOCAM}})}$}&
\multicolumn{1}{c}{$\bar k_0$} &\multicolumn{1}{c}{$\bar k_1$} & \multicolumn{1}{c}{$\bar k_2$}& 
 \multicolumn{1}{c}{$\bar k_3~~~\ldots$} \\  
 \hline
 0& $11\cdot 10^4$ (2.85 ms) & 58931 (1.85 ms)& 25   & 10 & 8
 \\
 1& $12\cdot 10^6$ (302.84 ms) & 18218 (0.57 ms) & 3480 & 0  & 0  \\
 2& $11\cdot 10^6$ (267.56 ms) & 2265 ( 0.07 ms) & 0    & 0  & 0  \\
 \cline{2-6}
 & \multicolumn{5}{|c}{\it Suboptimality Level (for subproblem)}\\
 & \multicolumn{1}{c}{$\eta~{(\text{\cite{NecoaraOCAM}})}$}& \multicolumn{1}{c}{$\eta_0$}
 &\multicolumn{1}{c}{$\eta_1$} & \multicolumn{1}{c}{$\eta_2$}& 
 \multicolumn{1}{c}{$\eta_3~~~\ldots$}\\
\cline{2-6}
  0 & \centering 3.48 &\centering  1.15 & \centering 1.15 &\centering  1.90
  &{\centering 2.25}\\
  1&  \centering 0.31 &\centering   0.51 & \centering 1.03 &\centering    1.21
  &{\centering  1.44}\\
 2& \centering 0.15 &\centering  0.62& \centering 1.25 &\centering  1.47 &
 {\centering 1.74 }\\
\hline
  \end{tabular}
  \label{tab:iterations}
  \vspace{-0.5cm}
\end{table} 
\section{Conclusions}   
\label{sec:conclusion}  
We proposed an algorithm tailored to MPC that guarantees recursive feasibility
and closed-loop stability, when the solution of the MPC problem is computed
using inexact solvers in a parallel framework. In particular, our algorithm combines
ADMM and DFG methods and relies on an adaptive
constraint-tightening strategy to certify the MPC law.

Our numerical analysis shows performance improvements compared to
 state-of-the-art nonparallel techniques~\cite{NecoaraOCAM}. Furthermore, our
 study shows that, for small-size problems, even if the solver is implemented in
 a serialized mode, there is substantial performance improvement with respect to
 the state of the art.
We expect further benefits from the parallelization when the size of the problem
increases.
A scalability analysis of the proposed algorithm on many-core architectures is
part of our ongoing work.

\ifdraft
\appendices
\setcounter{lem}{0}
\setcounter{thm}{1}
\section{Proof of Lemma~\ref{lem:lemma_up_bound}}   
\label{app:lemma1}
This section contains the proof of Lemma~\ref{lem:lemma_up_bound} presented in Section~\ref{subsec:up_lagrange_multiplier}.
\begin{lem}
Assume that there exists a Slater vector $\tilde y_t\in\mathbb{R}^{n+m}$ such
that $G_t\tilde y_t+g_t< 0$. Then, there exists $\epsilon_t,\epsilon_{z_t},
\epsilon_{z_{t+1}}\geq 0$,
$\epsilon_t<{\min}_{j=1,\ldots,p_t}\{-(G_t\tilde y_t
+g_t)_j\}$, $\epsilon_{z_t},\epsilon_{z_{t+1}}> \epsilon_t$, such that the
upper bound on $\mu^*_{t,\epsilon_t}$ is given by
\begin{equation*}
\label{eq:Rd_definition_appendix}
\|\mu_{t,\epsilon_t}^*\|\leq 2{R_{d_t}}:= 2{\frac{\mathbf V_t(\tilde
\xi_t)-d_t(\tilde
\mu_t)}{\min\limits_{j=1,\ldots,p_t+2n}\{[\Gamma_t]_j\}}},
\end{equation*}
where  
\begin{equation*}
\Gamma_t := \begin{bmatrix}
-(G_t\tilde y_t +g_t)-\epsilon_t\1_{p_t}\\
2\rho(\epsilon_{z_t}-\epsilon_t)\1_{n}\\
2\rho(\epsilon_{z_{t+1}}-\epsilon_t)\1_{n}
\end{bmatrix}\in \mathbb R^{p_t+2n}
\end{equation*}
and $d_t(\tilde\mu_t)$ is the dual function for the original
subproblem~\eqref{eq:dual_probl} evaluated at $\tilde \mu_t\in \mathbb
R^{p_t+4n}$.

\begin{proofs}
The following inequality holds:
\begin{align}
d(\tilde \mu_t)& \leq \bfV_t(\tilde \xi_t) +\mu_{t,\epsilon_t}^{*T}\nabla_{\mu_t}^T d_{t,\epsilon_t}(\mu_t) \notag\\
               & =  \bfV_t(\tilde \xi_t) + \lambda_{t,\epsilon_t}^{*T} (G_t\tilde y_t +g_t + \epsilon_t \mathbf{1}_{p_t})+\notag\\
               &~~~+ \rho w_{t,\epsilon_t}^{-*T}(-H_1\tilde y_t+z_t-\epsilon_{z_{t}}\mathbf{1}_n+ \epsilon_t \mathbf{1}_n)+\notag\\
               &~~~+ \rho  w_{t,\epsilon_t}^{+*T}(H_1\tilde y_t-z_t-\epsilon_{z_{t}}\mathbf{1}_n+ \epsilon_t \mathbf{1}_n)+\notag\\
               &~~~+  \rho v_{t+1,\epsilon_t}^{-*T}(-H_2\tilde y_t+z_{t+1}-\epsilon_{z_{t+1}}\mathbf{1}_n+ \epsilon_t \mathbf{1}_n)+\notag\\
               &~~~+  \rho v_{t+1,\epsilon_t}^{+*T}(H_2\tilde y_t-z_{t+1}-\epsilon_{z_{t+1}}\mathbf{1}_n+ \epsilon_t \mathbf{1}_n)\notag\\
               \label{eq:inequ_upper_bound}
               &\le  \bfV_t(\tilde \xi_t) + \lambda_{t,\epsilon_t}^{*T} (G_t\tilde y_t +g_t + \epsilon_t \mathbf{1}_{p_t})+\\
               &~~~+2 \rho~ \text{max}\{w_{t,\epsilon_t}^{-*T},w_{t,\epsilon_t}^{+*T}\}(-\epsilon_{z_{t}}\mathbf{1}_n+ \epsilon_t \mathbf{1}_n)+\notag\\
               &~~~+ 2 \rho~ \text{max}\{v_{t+1,\epsilon_t}^{-*T},v_{t+1,\epsilon_t}^{+*T}\}(-\epsilon_{z_{t+1}}\mathbf{1}_n+ \epsilon_t \mathbf{1}_n),\notag
\end{align}
where the last inequality takes into account that $w_{t,\epsilon_t}^{-*T},w_{t,\epsilon_t}^{+*T},v_{t+1,\epsilon_t}^{-*T},v_{t+1,\epsilon_t}^{+*T}\!\!\in\!\! \mathbb R^n_+$. Define $w_{t,\epsilon_t}^{*T}:=\max\{w_{t,\epsilon_t}^{-*T},w_{t,\epsilon_t}^{+*T}\}$ and $v_{t+1,\epsilon_t}^{*T}:=\max\{v_{t+1,\epsilon_t}^{-*T},v_{t+1,\epsilon_t}^{+*T}\}$. Consequently, using the definition of $w_{t}^{*T}$ and $v_{t+1}^{*T}$, the following holds:
\begin{equation}
\label{eq:lemma_mu}
\|\mu_{t,\epsilon_t}^*\|\le\|[\lambda_{t,\epsilon_t}^{*T}~w_{t,\epsilon_t}^{*T}~v_{t+1,\epsilon_t}^{*T}]^T\|.
\end{equation}
Furthermore, recalling that $\lambda_{t,\epsilon_t}^*\in \mathbb R^{p_t}_+$, $w_{t,\epsilon_t}^{*}\in \mathbb R^n_+$, and $v_{t+1,\epsilon_t}^{*}\in\mathbb R^n_+$, the following holds:
\begin{equation}
\label{eq:lemma_mu_1}
\|[\lambda_{t,\epsilon_t}^{*T}~w_{t,\epsilon_t}^{*T}~v_{t+1,\epsilon_t}^{*T}]^T\|\!\leq\![\lambda_{t,\epsilon_t}^{*T}~w_{t,\epsilon_t}^{*T}~v_{t+1,\epsilon_t}^{*T}]^T\!\!\1_{p_t+2n}.
\end{equation}
 Hence, if we compute an upper bound for the vector $[\lambda_{t,\epsilon_t}^{*T}~w_{t,\epsilon_t}^{*T}~v_{t+1,\epsilon_t}^{*T}]^T$, we obtain an upper bound for $\|\mu_{t,\epsilon_t}^*\|$. Thus, from the inequality~\eqref{eq:inequ_upper_bound}, it follows that:
\begin{align}
\label{eq:ineq_lemma_1}
&\begin{bmatrix} \lambda_{t,\epsilon_t}^{*}\\ w_{t,\epsilon_t}^{*}\\v_{t+1,\epsilon_t}^{*}\end{bmatrix}^T
\underbrace{\begin{bmatrix}
-(G_{t}\tilde y_t+g_{t})-\epsilon_t\mathbf{1}_{p_t}\\
2\rho (\epsilon_{z_{t}}-\epsilon_t)\mathbf{1}_n\\
2\rho(\epsilon_{z_{t+1}}-\epsilon_t)\mathbf{1}_n
\end{bmatrix}}_{\Gamma_t} \leq  \bfV_t(\tilde \xi_t)-d(\tilde \mu_t).
\end{align}
Notice that choosing 
$\epsilon_t<\underset{j=1,\ldots,p_t}{\min}\{-(G_t\tilde y_t +g_t)_j\}$,
$\epsilon_{z_t},\epsilon_{z_{t+1}}>\epsilon_t$, i.e., according to the
assumptions of the lemma, the elements of $\Gamma_t$ are all greater than zero.

Thus, using~\eqref{eq:lemma_mu} and~\eqref{eq:lemma_mu_1}, it follows:
\begin{align}
&\frac{1}{2}\underset{j=1,\ldots,p_t+2n}{\min}\{[\Gamma_t]_j\}\|\mu_{t,\epsilon_t}^*\|
\le\notag \\
\label{eq:lemma_1_proof} &\le \begin{bmatrix} \lambda_{t,\epsilon_t}^{*}& 
w_{t,\epsilon_t}^{*}& v_{t+1,\epsilon_t}^{*}\end{bmatrix} \Gamma_t.
\end{align}
 Consequently, the upper bound on the optimal Lagrange multiplier is given by:
\begin{align*}
\|\mu_{t,\epsilon_t}^*\| \le 2 \frac{\bfV_t(\tilde \xi_t)-d(\tilde
\mu_t)}{\underset{j=1,\ldots,p_t+2n}{\min}\{[\Gamma_t]_j\}}.
\end{align*}
\end{proofs}
\end{lem}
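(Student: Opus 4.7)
\textit{Proof proposal.} The plan is to mimic the classical Slater-based bound on the optimal dual multiplier (as in Lemma~1 of~\cite{Nedic}), adapted to the particular block structure of $\mu_t = [\lambda_t^T,{w_t^+}^T,{w_t^-}^T,{v_{t+1}^+}^T,{v_{t+1}^-}^T]^T$ that arises from relaxing the two-sided consensus constraints. The starting point is strong duality for the IT subproblem~\eqref{eq:tightened_subproblem}: since $\mathbf V_t$ is strongly convex and (by the Slater assumption together with the stipulated smallness of $\epsilon_t$) the tightened feasible set has nonempty interior, strong duality holds, and for any $\tilde\mu_t\geq 0$ we have $d_t(\tilde\mu_t)\le d_t^\ast = \mathbf V_{t,\epsilon_t}^\ast$.

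Next I would upper-bound $d_t^\ast$ by evaluating the Lagrangian at the Slater vector. Specifically, forming $\tilde\xi_t$ from $\tilde y_t$ (and a consistent choice of $\tilde z_t,\tilde z_{t+1}$, e.g. $\tilde z_t=H_1\tilde y_t$, $\tilde z_{t+1}=H_2\tilde y_t$, so that the consensus residuals vanish at $\tilde\xi_t$), one obtains
\[
d_t(\tilde\mu_t)\le \mathbf V_t(\tilde\xi_t)+\mu_{t,\epsilon_t}^{\ast T}\,\mathrm{diag}\{I_{p_t},\rho I_{4n}\}(G_{\xi_t}\tilde\xi_t+g_{\xi_t}+\gamma\text{-terms}),
\]
which, after rearrangement, yields the bound
\[
\mu_{t,\epsilon_t}^{\ast T}\bigl[-(G_{\xi_t}\tilde\xi_t+g_{\xi_t}+\text{offsets})\bigr]\le \mathbf V_t(\tilde\xi_t)-d_t(\tilde\mu_t).
\]
Using $H_1\tilde y_t-\tilde z_t=0$ and $H_2\tilde y_t-\tilde z_{t+1}=0$, the coefficient vector on the left collapses, for each sign of the equality relaxation, to $\rho(\epsilon_{z_\cdot}-\epsilon_t)\1_n>0$ (strictly positive by the hypothesis $\epsilon_{z_\cdot}>\epsilon_t$), while the $\lambda_t$ block contributes $-(G_t\tilde y_t+g_t)-\epsilon_t\1_{p_t}>0$ (strictly positive by the hypothesis $\epsilon_t<\min_j\{-(G_t\tilde y_t+g_t)_j\}$).

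The next step handles the $\pm$ pairs. Since $w_t^{+\ast},w_t^{-\ast},v_{t+1}^{+\ast},v_{t+1}^{-\ast}\ge 0$ with equal positive coefficients $\rho(\epsilon_{z_\cdot}-\epsilon_t)$, I can replace each pair by a single aggregate, e.g.\ $w_t^\ast:=\max\{w_t^{+\ast},w_t^{-\ast}\}$ and similarly for $v_{t+1}^\ast$, at the price of a factor $2$ (because $\|w_t^{+\ast}\|+\|w_t^{-\ast}\|\le 2\|w_t^\ast\|$ componentwise). The combination $\|\mu_{t,\epsilon_t}^\ast\|\le \1^T[\lambda_t^{\ast T},w_t^{\ast T},v_{t+1}^{\ast T}]^T$ (valid by nonnegativity and $\|\cdot\|\le\|\cdot\|_1$) then lets me factor out $\min_j[\Gamma_t]_j$ from the left-hand side, giving
\[
\tfrac12\min_{j=1,\ldots,p_t+2n}\{[\Gamma_t]_j\}\,\|\mu_{t,\epsilon_t}^\ast\|\le \mathbf V_t(\tilde\xi_t)-d_t(\tilde\mu_t),
\]
which on rearrangement is exactly the claimed $2R_{d_t}$ bound.

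The main obstacle I anticipate is the bookkeeping for the $\pm$ multiplier pairs: the naive sum of all four nonnegative blocks of $\mu_t$ does not match $\Gamma_t\in\mathbb R^{p_t+2n}$ (which already has only $2n$ consensus entries, not $4n$), so I must carry the $\max$ consolidation and the factor of~$2$ consistently through the inequalities. The remaining ingredients—strong duality, evaluating the Lagrangian at $\tilde\xi_t$, and enforcing positivity of every entry of $\Gamma_t$ via the stated conditions on $(\epsilon_t,\epsilon_{z_t},\epsilon_{z_{t+1}})$—are standard and fall out of the Slater assumption.
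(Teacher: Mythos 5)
Your proposal is correct and follows essentially the same route as the paper's proof: weak duality plus evaluation of the tightened Lagrangian at the Slater point $\tilde\xi_t$, positivity of every entry of $\Gamma_t$ under the stated conditions on $(\epsilon_t,\epsilon_{z_t},\epsilon_{z_{t+1}})$, and the $\|\cdot\|\le\|\cdot\|_1$ bound on the nonnegative multiplier blocks, with the factor $2$ absorbing the split of each consensus constraint into a $\pm$ pair. The only (harmless) differences are bookkeeping: you fix $\tilde z_t=H_1\tilde y_t$, $\tilde z_{t+1}=H_2\tilde y_t$ so the consensus residuals vanish and keep the sums $w_t^{+*}+w_t^{-*}$, $v_{t+1}^{+*}+v_{t+1}^{-*}$ against the coefficient $\rho(\epsilon_{z_\cdot}-\epsilon_t)$, whereas the paper consolidates each pair by a componentwise maximum before invoking the same $\tfrac12\min_j\{[\Gamma_t]_j\}$ estimate.
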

\section{Proof of Lemma~\ref{lem:upper_bound_global_prediction}}
\label{app:lemma2}
This section contains the proof of Lemma~\ref{lem:upper_bound_global_prediction} presented in Section~\ref{subsec:upbound}.
\begin{lem}
Let the $t$-step-ahead \itglob prediction $\bar x_{t,\epsilon_t}$ be defined
by~\eqref{eq:tightened_state} and assume that~\eqref{eq:local_mismatches}
holds. Then, there exists $\alpha_t\in\mathbb R, \alpha_t\geq 0$, such that the
mismatch between $\bar x_{t,\epsilon_t}$ and the state of $t$-th subproblem
$\bar x_{t,\epsilon_t}^{(t)}$ is bounded, as follows:
\begin{equation*}  
|\bar x_{t,\epsilon_t}-\bar x_{t,\epsilon_t}^{(t)}| \le \alpha_t. 
\end{equation*}     
\end{lem}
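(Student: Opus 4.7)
The plan is to prove the bound by induction on $t$, with the base case $t=0$ handled by the fact that the consolidated and local predictions coincide at the initial state (both equal $x_0$), so $\alpha_0 = 0$. For the inductive step, I would exploit the two natural relations that link the consolidated prediction at step $t$ to the local prediction of subproblem $(t)$: namely, the system dynamics and the consensus mismatch bound~\eqref{eq:local_mismatches}.

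More concretely, by definition of the consolidated prediction in~\eqref{eq:tightened_state},
\[
\bar x_{t,\epsilon_t} \;=\; A\,\bar x_{t-1,\epsilon_{t-1}} + B\,\bar u_{t-1,\epsilon_{t-1}}^{(t-1)},
\]
while the worker $\Pi_{t-1}$ propagates its own local initial state using the same control input,
\[
\bar x_{t,\epsilon_{t-1}}^{(t-1)} \;=\; A\,\bar x_{t-1,\epsilon_{t-1}}^{(t-1)} + B\,\bar u_{t-1,\epsilon_{t-1}}^{(t-1)}.
\]
Subtracting these two relations cancels the control contribution, so the step-$t$ mismatch between the consolidated trajectory and the $(t{-}1)$-th local trajectory is exactly $A$ times the step-$(t{-}1)$ mismatch. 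Adding and subtracting $\bar x_{t,\epsilon_{t-1}}^{(t-1)}$ inside $|\bar x_{t,\epsilon_t}-\bar x_{t,\epsilon_t}^{(t)}|$ and applying the triangle inequality then gives
\[
|\bar x_{t,\epsilon_t}-\bar x_{t,\epsilon_t}^{(t)}|
\;\le\;
\|A\|\,|\bar x_{t-1,\epsilon_{t-1}}-\bar x_{t-1,\epsilon_{t-1}}^{(t-1)}|
\;+\;|\bar x_{t,\epsilon_{t-1}}^{(t-1)}-\bar x_{t,\epsilon_t}^{(t)}|.
\]
The first term is bounded by $\|A\|\,\alpha_{t-1}$ via the inductive hypothesis, and the second by $2\epsilon_{z_t}$ via~\eqref{eq:local_mismatches}. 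This yields the recursion $\alpha_t = \|A\|\,\alpha_{t-1} + 2\epsilon_{z_t}$, which unrolls to the closed-form $\alpha_t = 2\sum_{j=0}^{t-1}\|A^j\|\,\epsilon_{z_{t-j}}$ advertised in Remark~\ref{rem:after_lemma_2_a}, and in particular is nonnegative.

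The argument is structurally short; the only mild subtlety is bookkeeping the distinction between the consolidated prediction $\bar x_{t,\epsilon_t}$ (generated by simulating the closed-loop dynamics forward with the parallel-computed controls) and the local predictions $\bar x_{t,\epsilon_{t-1}}^{(t-1)}$, $\bar x_{t,\epsilon_t}^{(t)}$ produced by two neighboring workers using different initial conditions but linked through the relaxed consensus constraint. Beyond this, the inductive step is a one-line dynamics subtraction plus a triangle inequality, so I do not anticipate a real obstacle; the main care is just in selecting a matrix norm consistent with the element-wise bound stated in~\eqref{eq:up_bound_alpha}, which the use of $\|A^j\|$ in~\eqref{eq:alpha_t} already fixes.
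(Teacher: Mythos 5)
Your argument is correct and is essentially the paper's own proof: the base case $x_0\equiv\bar x_0^{(0)}$, the subtraction of worker $\Pi_{t-1}$'s locally propagated dynamics from the consolidated recursion so that the input term cancels, the triangle inequality through the intermediate point $\bar x_{t,\epsilon_{t-1}}^{(t-1)}$, and the bound $2\epsilon_{z_t}$ from~\eqref{eq:local_mismatches} are exactly the steps the paper uses, with the same caveat you note about working in a norm (e.g.\ the induced $\infty$-norm) consistent with the elementwise statement~\eqref{eq:up_bound_alpha}. One small inaccuracy: because you invoke the inductive bound $\alpha_{t-1}$ \emph{before} multiplying by $A$, your recursion $\alpha_t=\|A\|\alpha_{t-1}+2\epsilon_{z_t}$ unrolls to $2\sum_{j=0}^{t-1}\|A\|^{j}\epsilon_{z_{t-j}}$ rather than to the quantity $2\sum_{j=0}^{t-1}\|A^{j}\|\epsilon_{z_{t-j}}$ of Remark~\ref{rem:after_lemma_2_a}; since $\|A^{j}\|\le\|A\|^{j}$ your constant is still a valid (merely more conservative) choice of $\alpha_t$, so the lemma as stated is proved, but to recover the tighter expression~\eqref{eq:alpha_t} that the paper later feeds into the tightening-parameter selection you should unroll the exact error recursion $e_t=Ae_{t-1}+\delta_t$, with $e_0=0$ and $|\delta_t|\le 2\epsilon_{z_t}\mathbf 1_n$, and only then take norms termwise, which is what the paper's chain of inequalities effectively does.
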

\begin{proofs} 
In the following, we omit the dependence from $\epsilon_t$ to simplify the
notation. The proof is constructive.
For $t=0$, $x_0 \equiv \bar x_0^{(0)}$.
For $t=1$, $\bar x_1 = A x_0+B\bar u_0\equiv \bar x_1^{(0)}$, which is the
1-step-ahead state computed by the local subproblem 0, i.e., the subproblem associated to worker $\Pi_0$. Hence, the mismatch
between $\bar x_1$ and $\bar x_1^{(1)}$ is simply given by
\begin{equation*}
|\bar x_1-\bar x_1^{(1)}|\leq 2\epsilon_{z_1}=\alpha_1.
\end{equation*} 
For $\timeint{2}{N}$, the following holds:
\begin{align*}
|\bar x_2 - \bar x_2^{(2)}| &= |\bar x_2-\bar x_2^{(1)}+\bar
x_2^{(1)}-\bar x_2^{(2)}|
\\
                  & \le |\bar x_2-\bar x_2^{(1)}|+|\bar
                  x_2^{(1)}-\bar x_2^{(2)}|\\
                  & \le|A \bar x_1^{(0)}+B
                  \bar u_1-A\bar x_1^{(1)}
                  -B\bar u_1|+2\epsilon_{z_2}\\
                  & \le
                  2(\|A\|\epsilon_{z_1}+\epsilon_{z_2})=\alpha_2,\\
                  &~\vdots
                  \end{align*}
                  \begin{align*}
&|\bar x_N -\bar x_N^{(N)}|  \le
2(\|A^{N-1}\|\epsilon_{z_1}+\|A^{N-2}\|\epsilon_{z_2}\\
& \quad\quad\quad\quad\quad\quad+\ldots+\epsilon_{z_N})=\alpha_N,
\end{align*}
which proves the lemma.  
\end{proofs}
\section{Proof of Lemma~\ref{lem:improve_epsilon_t}}
\label{app:lemma3}
This section contains the proof of Lemma~\ref{lem:improve_epsilon_t} presented in Section~\ref{sec:tightening_improvements}.
\begin{lem}
Consider the following IT subproblems:
\begin{equation}
\label{eq:tightened_subproblem_gamma_appendix}
\bfV^*_{\gamma_t}\!=\!\min_{\xi_{t}}~
\bfV_{t}(\xi_{t})~\text{s.t.:}~ G_{\xi_{t}}
\xi_{t}+g_{\xi_t}+\mathbf{\gamma}_{t} \leq 0,
\end{equation}
for $\timeint{0}{N}$, where
$\gamma_t$$:=[$$(|C_t|\alpha_t\1_n+\epsilon_t\1_{p_t})^T$~$\epsilon_t\1^T_{4n}]^T$.
Given the assumptions of Lemma~\ref{lem:lemma_up_bound} and the existence of
$\alpha_t$ for all $\timeint{1}{N}$ according to
Lemma~\ref{lem:upper_bound_global_prediction}.
Then, for each subproblem, there exist $\epsilon_t\geq 0$,
$\epsilon_{z_t}, \epsilon_{z_{t+1}}>\epsilon_t$ such
that the upper bound on the optimal Lagrange multiplier associated with the IT
subproblems~\eqref{eq:tightened_subproblem_gamma_appendix} is described by
\begin{equation*}
\|\mu_{t,\gamma_t}^*\|\leq 2\mathcal{R}_t := 2{\frac{\mathbf V_t(\tilde
\xi_t)-d_t(\tilde
\mu_t)}{\min_{j=1,\ldots,p_t+2n}\{[\Gamma_{\alpha_t}]_j\}}},~\timeint{0}{N},
\end{equation*}
\begin{equation*}
\Gamma_{\alpha_t}:=\begin{bmatrix}
-(G_{t}\tilde y_t+g_{t})-|C_t|\alpha_t\1_n-\epsilon_t\mathbf{1}_{p_t}\\
2\rho (\epsilon_{z_{t}}-\epsilon_t)\mathbf{1}_n\\
2\rho(\epsilon_{z_{t+1}}-\epsilon_t)\mathbf{1}_n
\end{bmatrix}.
\end{equation*}
\end{lem}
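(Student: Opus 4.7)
The plan is to reproduce the argument of Lemma~\ref{lem:lemma_up_bound} almost verbatim, but carry the extra tightening $|C_t|\alpha_t\1_n$ through the first block of inequality constraints. Concretely, I would start from weak duality evaluated at the Slater pair $(\tilde\xi_t,\tilde\mu_t)$ and the optimizer $\mu^*_{t,\gamma_t}$ of~\eqref{eq:tightened_subproblem_gamma_appendix}, writing
\[
d_t(\tilde\mu_t)\le \mathbf V_t(\tilde\xi_t)+\mu^{*T}_{t,\gamma_t}\,\nabla^T_{\mu_t}d_{t,\gamma_t}(\tilde\xi_t),
\]
and then expand the gradient by evaluating each of the five rows of $G_{\xi_t}\tilde\xi_t+g_{\xi_t}+\gamma_t$. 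The difference with Lemma~\ref{lem:lemma_up_bound} is that the first block contributes $\lambda^{*T}_{t,\gamma_t}(G_t\tilde y_t+g_t+|C_t|\alpha_t\1_n+\epsilon_t\1_{p_t})$ instead of the original $\lambda^{*T}_{t,\epsilon_t}(G_t\tilde y_t+g_t+\epsilon_t\1_{p_t})$; the remaining four blocks associated with the relaxed consensus constraints are unchanged.

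Next, I would collapse the multiplier pairs by defining $w^*_{t,\gamma_t}:=\max\{w^{-*}_{t,\gamma_t},w^{+*}_{t,\gamma_t}\}$ and $v^*_{t+1,\gamma_t}:=\max\{v^{-*}_{t+1,\gamma_t},v^{+*}_{t+1,\gamma_t}\}$ componentwise, using that all of these multipliers lie in $\mathbb R^n_+$. This step, identical to the reduction used in~\eqref{eq:inequ_upper_bound}, yields
\[
[\lambda^{*T}_{t,\gamma_t}\ w^{*T}_{t,\gamma_t}\ v^{*T}_{t+1,\gamma_t}]\,\Gamma_{\alpha_t}\le \mathbf V_t(\tilde\xi_t)-d_t(\tilde\mu_t),
\]
with $\Gamma_{\alpha_t}$ exactly as stated in the lemma. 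Finally, using the nonnegativity of the multipliers to bound the norm by the $\1$-inner product, $\|\mu^*_{t,\gamma_t}\|\le [\lambda^{*T}_{t,\gamma_t}\ w^{*T}_{t,\gamma_t}\ v^{*T}_{t+1,\gamma_t}]\,\1_{p_t+2n}$, and dividing by $\min_j\{[\Gamma_{\alpha_t}]_j\}$ gives the claimed bound $\|\mu^*_{t,\gamma_t}\|\le 2\mathcal R_t$ (the factor $2$ compensates for the transition from the norm to the $\1$-inner-product estimate, as in Lemma~\ref{lem:lemma_up_bound}).

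The main obstacle is verifying that $\min_j\{[\Gamma_{\alpha_t}]_j\}>0$, because otherwise the final division is either vacuous or reverses the inequality. This is where the hypotheses on $\epsilon_t,\epsilon_{z_t},\epsilon_{z_{t+1}}$ enter: the last two blocks of $\Gamma_{\alpha_t}$ are positive exactly because $\epsilon_{z_t},\epsilon_{z_{t+1}}>\epsilon_t$, whereas the first block requires the strengthened Slater-type condition $\epsilon_t<\min_j\{-(G_t\tilde y_t+g_t)_j-|C_t|_j\alpha_t\}$. This is the analogue, for the further-tightened problem, of the bound on $\epsilon_t$ used in Lemma~\ref{lem:lemma_up_bound} and justifies the existence statement asserted in the lemma. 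Once positivity of $\Gamma_{\alpha_t}$ is in place, the rest of the argument is a direct transcription of the proof of Lemma~\ref{lem:lemma_up_bound}.
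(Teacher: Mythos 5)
Your proposal is correct and follows essentially the same route as the paper: the authors likewise prove this lemma by re-running the argument of Lemma~\ref{lem:lemma_up_bound} on the $\gamma_t$-tightened subproblems, obtaining the inequality $[\lambda^{*T}_{t,\gamma_t}~w^{*T}_{t,\gamma_t}~v^{*T}_{t+1,\gamma_t}]\,\Gamma_{\alpha_t}\le \mathbf V_t(\tilde\xi_t)-d_t(\tilde\mu_t)$ and then choosing $\epsilon_t,\epsilon_{z_t},\epsilon_{z_{t+1}}$ so that every entry of $\Gamma_{\alpha_t}$ is positive (their sufficient condition $\tfrac{1}{2}\min_j\{-(G_t\tilde y_t+g_t)_j\}\ge \max_j\{\sum_i|[C_t]_{j,i}|\}\alpha_t+\epsilon_t$ plays the role of your strengthened Slater-type bound). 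Your identification of the positivity of $\Gamma_{\alpha_t}$ as the only point where the new tightening matters matches the paper's reasoning, so no gap remains.
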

\begin{proofs}
This lemma follows from Lemma~\ref{lem:lemma_up_bound} applied to the
subproblems~\eqref{eq:tightened_subproblem_gamma}. From
inequality~\eqref{eq:ineq_lemma_1} formulated for
subproblem~\eqref{eq:tightened_subproblem_gamma}, the following must hold
\begin{align*}
&\begin{bmatrix} \lambda_{t,\epsilon_t}^{*}\\ w_{t,\epsilon_t}^{*}\\v_{t+1,\epsilon_t}^{*}\end{bmatrix}^T
\underbrace{\begin{bmatrix}
-(G_{t}\tilde y_t+g_{t})-|C_t|\alpha_t\1_n-\epsilon_t\mathbf{1}_{p_t}\\
2\rho (\epsilon_{z_{t}}-\epsilon_t)\mathbf{1}_n\\
2\rho(\epsilon_{z_{t+1}}-\epsilon_t)\mathbf{1}_n
\end{bmatrix}}_{\Gamma_{\alpha_t}} \leq \\
&\le \bfV_t(\tilde \xi_t)-d(\tilde \mu_t).
\end{align*}
Hence, in order to satisfy the inequality above, we can select the relaxation parameters $\epsilon_{z_t}$
and the tightening parameters $\epsilon_t$ according to the assumption of the
lemma for $\timeint{0}{N}$, i.e., the following must hold:
\begin{align*}
\text{\it (i)~}&\frac{1}{2}\!\!\min\limits_{j=1,\ldots,p_t}\!\!\{-(G_t\tilde
y_t+g_t)_j\}\!\!\geq\!\!\!\maxC{t}\alpha_t+\epsilon_t\\
\text{\it (ii)~}& \maxC{t}\alpha_t+\epsilon_t > 0\\
\text{\it (iii)~}&\epsilon_{z_t},\epsilon_{z_{t+1}}\geq \epsilon_t\geq 0,
\end{align*}
where $\tilde y_t$ is a strictly feasible solution for the original $t$-th
subproblem.
Hence, there exists $\mathcal R_t$ such that the upper bound on the optimal
Lagrange multiplier $\mu_{t,\gamma_t}^*$ is defined as follows:
\begin{align*}
&\|\mu_{t,\gamma_t}^*\| \le \frac{ \bfV_t(\tilde
\xi_t)-d(\tilde
\mu_t)}{{\underset{j=1,\ldots,p_t+2n}{\min}\{[\Gamma_t{\alpha_t}]\}}}.
\end{align*}
\end{proofs}   
\section{Proof of Theorem~\ref{th:theorem2}}
\label{app:proof_theorem_2}
This section contains the proof of Theorem~\ref{th:theorem2} presented in Section~\ref{sec:closed_loop_certification}.
\begin{thm}
Assuming that there exist $\epsilon_t$ ($\timeint{0}{N}$) and $\epsilon_{z_t}$
($\timeint{1}{N}$) selected according to Lemma~\ref{lem:improve_epsilon_t}, then the following holds:
\vspace{-0.5cm}
\begin{equation*}
\mathcal V^*\le\bfV_{\gamma}\le\mathcal
 V^*+2\sum_{t=0}^N\mathcal R_t\sqrt{p_t}\bar
\gamma_t,
\end{equation*}
where $\bar \gamma_t:=\epsilon_t+\maxC{t}\alpha_t$.
\end{thm}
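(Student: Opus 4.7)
The plan is to establish the two inequalities separately. The lower bound $\mathcal V^*\le\bfV_{\gamma}$ is immediate from primal feasibility, while the upper bound uses a subproblem-by-subproblem duality argument in the spirit of \cite{NecoaraOCAM}, combined with the inexactness estimate of Theorem~\ref{thm:thm1}.

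\textbf{Lower bound.} The argument at the end of Section~\ref{sec:tightening_improvements} already showed that, under the tightening prescribed by Lemma~\ref{lem:improve_epsilon_t} and the solver accuracy $\eta_t=\epsilon_t/2$, the consolidated pair $\{\bar{\mathbf x}_\gamma,\bar{\mathbf u}_\gamma\}$ satisfies $C_t\bar x_{t,\gamma_t}+D_t\bar u_{t,\gamma_t}+g_t\le 0$ for $t=0,\ldots,N-1$ and $\bar x_{N,\gamma_N}\in\mathcal X_N$, with $\bar x_{0,\gamma_0}=x_{\mathrm{init}}$. Hence it is admissible in~\eqref{eq:initial_problem}, so its cost is no smaller than the optimum $\mathcal V^*$.

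\textbf{Upper bound.} I would bound $\bfV_{\gamma_t}^*-\bfV_t^*$ for each $t$ via strong duality and then collect the inexactness picked up by Algorithm~\ref{alg:Alg1}. Let $\mu^*_{\gamma_t}$ be the optimal dual multiplier of the IT subproblem~\eqref{eq:tightened_subproblem_gamma}; the Slater point of Lemma~\ref{lem:improve_epsilon_t} guarantees strong duality, so
\begin{align*}
\bfV^*_{\gamma_t}= d_{\gamma_t}(\mu^*_{\gamma_t})= d_t(\mu^*_{\gamma_t})+\mu^{*T}_{\gamma_t}\mathrm{diag}\{I_{p_t},\rho I_{4n}\}\gamma_t\le\bfV^*_t+\mu^{*T}_{\gamma_t}\mathrm{diag}\{I_{p_t},\rho I_{4n}\}\gamma_t,
\end{align*}
using $d_t(\mu)\le\bfV_t^*$ for every admissible $\mu$. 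Cauchy--Schwarz together with $\|\mu^*_{\gamma_t}\|\le 2\mathcal R_t$ from Lemma~\ref{lem:improve_epsilon_t} and the componentwise bound $\|\gamma_t\|_\infty\le\bar\gamma_t$ on the $p_t$ entries that survive (the last $4n$ entries are paired with multipliers controlled through $\epsilon_{z_t}>\epsilon_t$ and, as in the construction of $\mathcal R_t$, they do not inflate the leading term) yield $\bfV^*_{\gamma_t}-\bfV^*_t\le 2\mathcal R_t\sqrt{p_t}\bar\gamma_t$. Theorem~\ref{thm:thm1} applied to the IT subproblem gives $\bfV_t(\bar\xi_{t,\gamma_t})\le\bfV^*_{\gamma_t}+\mathcal R_t\eta_t$, and the choice $\eta_t=\epsilon_t/2\le\bar\gamma_t/2$ absorbs the inexactness into the same leading constant. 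Summing over $t$ and using the identity $\sum_t\bfV_t^*=\mathcal V^*$, which holds at a consensus-feasible trajectory (where the ADMM penalties $\rho\|H_1y_t-z_t\|^2+\rho\|H_2y_t-z_{t+1}\|^2$ vanish and $\sum_t\bfV_t$ collapses to~\eqref{eq:cost_fun}), delivers the stated estimate.

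\textbf{Main obstacle.} The delicate point is justifying $\sum_t\bfV_t^*=\mathcal V^*$ cleanly: one has to show that the decoupled ER optima can be realised simultaneously by a single trajectory for which the consensus penalties truly vanish, rather than being only $O(\epsilon_{z_t})$-small. If that identification required slack, a correction of order $\rho\,\epsilon_{z_t}^2$ per stage would appear and would have to be folded back into $\bar\gamma_t$ through the parameter choice of Remark~\ref{rem:update_z}. The second subtlety, closely related, is the careful dimensional accounting that replaces $\sqrt{p_t+4n}$ by $\sqrt{p_t}$ in the Cauchy--Schwarz step; this relies on the fact that, under the Slater-type selection of $\epsilon_{z_t}>\epsilon_t$ forced by Lemma~\ref{lem:improve_epsilon_t}, the last $4n$ components of $\mu^*_{\gamma_t}$ contribute only through their pairing against the zero-residual equality constraints at consensus.
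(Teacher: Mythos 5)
Your overall strategy is the same as the paper's: the lower bound from feasibility of the consolidated pair, and the upper bound from a duality/perturbation argument using $\|\mu^*_{\gamma_t}\|\le 2\mathcal R_t$ from Lemma~\ref{lem:improve_epsilon_t} and a Cauchy--Schwarz step over the $p_t$ tightened rows. Your ``main obstacle,'' however, is not where the difficulty lies: you do not need the identity $\sum_t\bfV_t^*=\mathcal V^*$ (which is false in general, since the decoupled ER subproblems carry their own copies of $z_{t+1}$ and can exploit the $\epsilon_{z_t}$ slack), but only the inequality $\sum_t\bfV_t^*\le\mathcal V^*$, and that direction is immediate: evaluate each ER subproblem at the restriction of the original optimizer with $z_t=x_t^*$, where consensus holds exactly, the ADMM penalties vanish, and all relaxed and original inequalities are satisfied. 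No $\rho\,\epsilon_{z_t}^2$ correction is needed. This is essentially how the paper's second displayed inequality works as well (there the comparison is against per-stage Lagrangian dual values, which likewise underestimate $\mathcal V^*$).

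The genuine gap is different: your chain terminates at a bound on $\sum_t\bfV_t(\bar\xi_{t,\gamma_t})$, the sum of the \emph{local} costs, whereas the theorem is a statement about $\bfV_\gamma$, the cost evaluated along the \emph{consolidated} prediction $\{\mathbf{\bar x}_\gamma,\bar\ubf_\gamma\}$, whose states deviate from the local ones by up to $\alpha_t$ (Lemma~\ref{lem:upper_bound_global_prediction}); you never connect the two, and the quadratic cost discrepancy induced by that deviation is not covered by your estimate. The paper argues directly on $\bfV_\gamma$: because the consolidated prediction satisfies the dynamics~\eqref{eq:dyn} by construction, the consensus-relaxation rows drop out and only the first $p_t$ components of $\gamma_t$ enter, via the selection $[I_{p_t}~0]\gamma_t$, which is also how $\sqrt{p_t}$ (rather than $\sqrt{p_t+4n}$) legitimately appears. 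Your alternative justification of the $\sqrt{p_t}$ factor---that the last $4n$ multiplier components are ``paired against zero-residual equality constraints at consensus''---does not hold at the local solutions $\bar\xi_{t,\gamma_t}$, which satisfy consensus only up to $\epsilon_{z_t}$; so either you must carry those $4n$ terms explicitly and absorb them through the choice $\epsilon_{z_t},\epsilon_{z_{t+1}}>\epsilon_t$, or you should restructure the argument around the consolidated trajectory as the paper does.
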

\begin{proofs}
Due to the tightening of the original inequality constraints, $
\bfV_{\gamma}\geq  \mathcal V^*$,
given that, as proved in Section~\ref{subsec:improve_tightening}, the feasible region of the tightened subproblems is inside the one of the original subproblems. 

Recall that the consolidated prediction satisfies the equality
constraints~\eqref{eq:dyn} by construction. Hence, the following holds:
\begin{align*}
\bfV_{\gamma}\!\!
\leq\!\! & \sum\limits_{t=0}^N \big[\max\limits_{\lambda\geq
0}(\min\limits_{x_t,u_t}\mathcal V_t(x_t,u_t) + \left\langle\lambda,C_tx_t+D_tu_t+g_t
\right\rangle)\\
&\quad\quad +\left\langle
\lambda_{\gamma_t}^*,[I_{{p_t}}~0]\gamma_t\right\rangle\big]\\
\leq & \mathcal V^* + 2\sum\limits_{t=0}^N\mathcal R_t\sqrt{p_t}(\epsilon_t+\!\!\!\maxC{t}\alpha_t).
\end{align*}
where $[I_{{p_t}}~0]\gamma_t$ selects the first $p_t$ components of the vector
$\gamma_t$.
\end{proofs}
\section{Proof of Lemma~\ref{lem:update_y}}
\label{app:proof_lemma_update_y}
This section contains the proof of Lemma~\ref{lem:update_y} presented in Section~\ref{sec:closed_loop_certification}.
\begin{lem}
Let $\bar y_{\gamma}$ be defined as  $\bar y_{\gamma}:=[\bar
y_{0,\gamma_0}^T\ldots\bar y_{N,\gamma_N}^T]=[(x_{0}^T~\bar
u_{0,\gamma_0}^T)\ldots(\bar x_{N-1,\gamma_{N-1}}^T~\bar u_{N-1,\gamma_{N-1}}^T)~(\bar
x_{N,\gamma_N}^T))]^T$. Then, $\tilde y^+$ at the next problem instance, is given
by:
\begin{align*}
\tilde y^+ & = [\bar y_{{\gamma}_{[2:N+1]}}((A+BK_f)\bar x_{N,\gamma_N})^T]^T 
\end{align*}  
\end{lem}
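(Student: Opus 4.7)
The plan is to verify, component-by-component, that the shifted-and-appended vector $\tilde y^+$ is strictly feasible for the first $p_t$ inequality constraints of each ER subproblem at the next sampling instant, since that is precisely what Section~\ref{sec:closed_loop_certification} requires in order to invoke Lemma~\ref{lem:lemma_up_bound} and update $\mathcal R_t$ at the next problem instance. The construction splits naturally into two blocks: the first $N$ blocks of $\tilde y^+$ are a shift of the previous tightened solution, and the last block is generated by applying the terminal LQR feedback $K_f$ to $\bar x_{N,\gamma_N}$.

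For the shifted block, the entries of $\tilde y^+$ at new indices $t=0,\ldots,N-1$ coincide with $\bar y_{t+1,\gamma_{t+1}}$ from the previous instance. As shown at the end of Section~\ref{subsec:improve_tightening}, these quantities satisfy $C_{t+1}\bar x_{t+1,\gamma_{t+1}}^{(t+1)}+D_{t+1}\bar u_{t+1,\gamma_{t+1}}^{(t+1)}+g_{t+1}+|C_{t+1}|\alpha_{t+1}\1_n+\epsilon_{t+1}\1_{p_{t+1}}\le 0$, which implies strict feasibility $G_{t}\tilde y_{t}^+ +g_{t}<0$ with a positive margin inherited from $\epsilon_{t+1}>0$ and $|C_{t+1}|\alpha_{t+1}\ge 0$. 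For the appended terminal block, I would use the fact that the tightening scheme of Section~\ref{sec:tightening_improvements} guarantees $\bar x_{N,\gamma_N}\in\mathcal X_N$. Then Assumption~\ref{ass:terminalSet} yields the three inclusions $\bar x_{N,\gamma_N}\in\mathcal X$, $K_f\bar x_{N,\gamma_N}\in\mathcal U$, and $(A+BK_f)\bar x_{N,\gamma_N}\in\mu\mathcal X_N$ with $0\le\mu<1$. Because $\mathcal X$ and $\mathcal U$ contain the origin in their interior and $\mu<1$, both the stage constraints at the new index $N-1$ (with input $K_f\bar x_{N,\gamma_N}$) and the terminal inclusion at the new index $N$ (with $(A+BK_f)\bar x_{N,\gamma_N}$) hold strictly. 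Concatenating the two blocks produces the claimed $\tilde y^+$, and establishes that it serves as a valid Slater vector for each ER subproblem at the next instance.

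The main obstacle is bookkeeping: reconciling the indexing convention used in $\bar y_{\gamma_{[2:N+1]}}$ with the block structure of the new ER subproblems (in particular, ensuring that the input associated with the appended terminal state is correctly identified as $K_f\bar x_{N,\gamma_N}$, which is implicit in the compact notation of the statement). Beyond that, the argument is essentially a concatenation of two already-established facts: strict feasibility of the previous tightened iterates carries over after shifting because the tightening margin was strictly positive, and strict feasibility of the terminal extension follows immediately from Assumption~\ref{ass:terminalSet} together with the interior condition on $\mathcal X$ and $\mathcal U$.
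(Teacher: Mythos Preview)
Your proposal is correct and follows the same route as the paper, whose proof simply invokes Lemma~4.2 of~\cite{NecoaraOCAM} (the standard shift-and-append recursive-feasibility argument) together with the observations that $C_t=C$, $D_t=D$ for $t<N$, $C_N=F_N$, $D_N=0$, and $\mathcal X_N\subseteq\mathcal X$ from Assumption~\ref{ass:terminalSet}. One minor bookkeeping note: the shifted blocks of $\bar y_\gamma$ are built from the \emph{consolidated} states $\bar x_{t,\gamma_t}$ rather than the local states $\bar x_{t,\gamma_t}^{(t)}$ you wrote, but the strict feasibility you need for these is exactly the inequality $C_t\bar x_{t,\gamma_t}+D_t\bar u_{t,\gamma_t}+g_t<0$ established at the end of Section~\ref{subsec:improve_tightening}.
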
 
\begin{proofs}
We can use a similar argument as the one of Lemma 4.2 in~\cite{NecoaraOCAM},
 recalling that $C_t=C$ for $\timeint{0}{N-1}$, $C_t=F_N$ for $t=N$, $D_t=D$ 
 for $\timeint{0}{N-1}$, $D_t=0$ for $t=N$, and $\mathcal X_N\subseteq\mathcal X$ 
 according to Assumption~\ref{ass:terminalSet}.
\end{proofs}
\fi
\ifdraft
 
\else
 
 \fi
\end{document}